\theoremstyle{plain}
\newtheorem{theorem}{Theorem}[section]
\newtheorem{thm}[theorem]{Theorem}
\newtheorem{cor}[theorem]{Corollary}
\newtheorem{prop}[theorem]{Proposition}
\newtheorem{lem}[theorem]{Lemma}
\theoremstyle{definition}
\newtheorem{rem}[theorem]{Remark}
\newtheorem{defn}[theorem]{Definition}
\newtheorem{con}{Conclusion}%% label is not in terms of sections
\newtheorem*{con*}{Conclusion} %% no labels
\newcommand{\bZ}{{\mathbb{Z}}}
\newcommand{\fh}{{\mathfrak{h}}}
\newcommand{\upchi}{{\raise.35ex\hbox{\ensuremath{\chi}}}}
\newcommand{\qforal}{\quad\text{for all}\quad}
\newcommand{\Ad}{\operatorname{Ad}}
\newcommand{\diag}{\operatorname{diag}}
\newcommand{\id}{{\operatorname{id}}}
\newcommand{\ca}{\mathrm{C}^*}
\newcommand{\ol}{\overline}
\newcommand{\aut}{{\rm Aut}}
\newcommand{\sym}{\operatorname{Sym}}
\newcommand{\Aff}{\operatorname{Aff}}
\begin{document}
%%%%%%%%%%%%%%%%%%%%%%%%%%%%%%%%%%%%%%
\title[Affine Actions and the YBE]{Affine Actions and the Yang-Baxter Equation}
\author[D. Yang]
{Dilian Yang}
\address{Dilian Yang,
Department of Mathematics $\&$ Statistics, University of Windsor, Windsor, ON
N9B 3P4, CANADA} \email{dyang@uwindsor.ca}

\begin{abstract}
In this paper, the relations between the Yang-Baxter equation and affine actions are explored in detail.  
In particular, we classify solutions of the Yang-Baxter equations in two ways: (i) by their associated affine actions of
their structure groups on their derived structure groups, and (ii) by the C*-dynamical systems obtained from their associated 
affine actions. On the way to our main results, several 
other useful results are also obtained. 
\end{abstract}

\subjclass[2010]{16T25.}
\keywords{Yang-Baxter equation, set-theoretic solution, affine action}
\thanks{The author was partially supported by an NSERC Discovery grant.}

\date{}
\maketitle
%%%%%%%%%%%%%%%%%%%%%%%%%%%%%%%%%%%%%%

The Yang-Baxter equation has been extensively studied in the literature since \cite{Yan67}.  It plays important roles not only in statistical mechanics,
but also in other areas, such as, quantum groups, link invariants, operator algebras, and the conformal field theory. In general, it is a rather challenging  
problem to find all solutions of the Yang-Baxter equation. Following a suggestion given in \cite{Dri92}, many researchers have done a lot of work on studying 
a special but important class of solutions, which are now known as set-theoretic solutions. 
See, for example, \cite{CJO14, CJdR10, ESS, GC, GM, LYZ00, Sol00, Yan16} to name just a few, and the references therein. 

The main aim of this paper is to explore the relations between the Yang-Baxter equation and affine actions on groups. The main ideas
behind here are motivated by \cite{ESS, LYZ00, Sol00}. The rest of this paper is organized as follows. 
In Section \ref{S:YBE}, we recall some necessary background on the Yang-Baxter equation which
will be needed later. In Section \ref{S:affine}, we first introduce affine actions and some related notions, then
associate to every solution of the Yang-Baxter equation a regular affine action of its structure group on its derived structure group   (Proposition \ref{P:YBEaff}), and finally describe two constructions of solutions to the Yang-Baxter equation via their associated affine actions.
Our main results of this paper are given in Section \ref{S:classification}. 
We classify injective solutions of the Yang-Baxter equation in terms of their associated affine actions (Theorem \ref{T:giso}). 
We further obtain a connection with C*-dynamical systems. It is shown that injective solutions can also be classified via their associated C*-dynamical systems (Theorem \ref{T:ds}). 
We end this paper with an appendix, which provides a commutation relation for semi-direct product of solutions to the Yang-Baxter equation 
determined by cycle sets, which might be useful in the future studies. 

 \section{The Yang-Baxter equation} 
 \label{S:YBE}

In this section, we provide some background on the Yang-Baxter equation which will be useful later. 

Let $X$ be a (non-empty) set, and $X^n:=\overbrace{X\times\cdots \times X}^n$ for $n\ge 2$. 

\begin{defn}
\label{D:YBE}
Let $R(x,y)=(\alpha_x(y), \beta_y(x))$ be a bijection on $X^2$. 
We call $R$ a \textit{set-theoretic solution of the Yang-Baxter equation} (abbreviated as \textit{YBE})
if
\begin{align}
\label{E:YBE}
R_{12}R_{23}R_{12}=R_{23}R_{12}R_{23}
\end{align}
on $X^3$, where $R_{12}=\id_X\times R$ and $R_{23}=R\times \id_X$. 
We often simply call $R$ a \textit{YBE solution on $X$}. Sometimes, we write it as 
$R_X$ or a pair $(R,X)$. A YBE solution $R$ on $X$ is said to be  
\begin{itemize}
\item \textit{involutive} if $R^2=\id_{X^2}$;

\item  \textit{non-degenerate} if, for all $x\in X$, $\alpha_x$ and $\beta_x$ are bijections on $X$;

\item \textit{symmetric} if $R$ is involutive and non-degenerate. 
\end{itemize}
\end{defn}

\subsection*{Standing assumptions:} \textsf{All YBE solutions in the rest of this paper are always assumed to be set-theoretic and non-degenerate.}

\subsection{Two characterizations of YBE solutions} 
The following lemma is well-known in the literature and also easy to prove. 

 \begin{lem}
 \label{L:basic}
 Let $R(x,y)=(\alpha_x(y),\beta_y(x))$. Then $R$ is a YBE solution on $X$, if and only if the following properties hold true:
 for all $x,y,z\in X$, 
\begin{itemize}
\item[(i)]
$\alpha_x \alpha_y=\alpha_{\alpha_x(y)}\alpha_{\beta_y(x)},$
\item[(ii)]
$\beta_y\beta_x=\beta_{\beta_y(x)}\beta_{\alpha_x(y)},$ and 
\item[(iii)] 
$\beta_{\alpha_{\beta_y(x)}(z)} (\alpha_x(y))=\alpha_{\beta_{\alpha_y(z)}(x)} (\beta_z(y))$ {\rm (Compatibility Condition)}.
\end{itemize}
Furthermore, $R$ is involutive if and only if 
\[
\alpha_{\alpha_x(y)}(\beta_y(x))=x \quad\text{and}\quad \beta_{\beta_y(x)}(\alpha_x(y))=y\qforal x,y\in X.
\]
 \end{lem}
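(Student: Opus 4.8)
The plan is to verify the equivalence by a direct computation of both sides of the defining equation \eqref{E:YBE} on an arbitrary triple $(x,y,z)\in X^3$, followed by a coordinatewise comparison. First I would record the two building blocks coming from $R_{12}=\id_X\times R$ and $R_{23}=R\times\id_X$, namely
\[
R_{12}(x,y,z)=(x,\alpha_y(z),\beta_z(y)),\qquad R_{23}(x,y,z)=(\alpha_x(y),\beta_y(x),z).
\]
Applying these three times in the prescribed order (always composing right-to-left) is then a purely mechanical substitution, the only subtlety being to keep the nested subscripts straight.

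Carrying out the left-hand side $R_{12}R_{23}R_{12}(x,y,z)$ step by step, I expect to arrive at
\[
\big(\alpha_x\alpha_y(z),\ \alpha_{\beta_{\alpha_y(z)}(x)}(\beta_z(y)),\ \beta_{\beta_z(y)}\beta_{\alpha_y(z)}(x)\big),
\]
while the right-hand side $R_{23}R_{12}R_{23}(x,y,z)$ should come out as
\[
\big(\alpha_{\alpha_x(y)}\alpha_{\beta_y(x)}(z),\ \beta_{\alpha_{\beta_y(x)}(z)}(\alpha_x(y)),\ \beta_z\beta_y(x)\big).
\]
Since $R_{12}$ and $R_{23}$ are bijections of $X^3$, equation \eqref{E:YBE} holds if and only if these two triples agree for all $x,y,z\in X$, which by matching coordinates splits into exactly three scalar identities.

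The final step is to read off the conditions. Equating the first coordinates for all $z$ yields $\alpha_x\alpha_y=\alpha_{\alpha_x(y)}\alpha_{\beta_y(x)}$, which is (i); equating the third coordinates and relabelling the free variables ($y\mapsto x$, $z\mapsto y$) yields $\beta_y\beta_x=\beta_{\beta_y(x)}\beta_{\alpha_x(y)}$, which is (ii); and equating the middle coordinates is precisely the Compatibility Condition (iii). Conversely, if (i)--(iii) hold, the two triples agree coordinate by coordinate, so \eqref{E:YBE} follows. For the involutive statement I would simply compute $R^2(x,y)=R(\alpha_x(y),\beta_y(x))=(\alpha_{\alpha_x(y)}(\beta_y(x)),\,\beta_{\beta_y(x)}(\alpha_x(y)))$ and observe that $R^2=\id_{X^2}$ is equivalent to the two displayed equalities.

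The computations are elementary, so the only genuine obstacle is bookkeeping: tracking the nested subscripts and arguments through the threefold compositions without error, and being careful to assert each coordinate identity \emph{for all} $x,y,z$, so that it is truly equivalent to the corresponding functional equation rather than merely a consequence of it. A clean way to control this is to name the intermediate output at each of the six substitution steps before simplifying, which makes the passage from coordinate equalities to the identities (i)--(iii) transparent.
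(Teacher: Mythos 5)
Your computation is correct: both threefold compositions come out exactly as you display them, matching coordinates gives (i) from the first slot, (ii) from the third slot after the relabelling you indicate, and (iii) from the middle slot, and the formula for $R^2$ settles the involutive claim. The paper omits the proof entirely (declaring the lemma well-known and easy), and your direct coordinatewise verification is precisely the standard argument it has in mind, so there is nothing to add.
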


Let us associate to a given YBE solution an important object -- its structure group. 
%%%%%%%%%%
\begin{defn}
Let $R(x,y)=(\alpha_x(y),\beta_y(x))$ be a YBE solution on $X$. 
%%%%%
The structure group of $R$, denoted as $G_{R_X}$, is the group generated by $X$ with commutation relations determined by $R$:
\[
G_{R_X}={}_{\text{gp}}\big\langle X; xy=\alpha_x(y)\beta_y(x)\text{ for all } x,y\in X\big\rangle.
\]
Sometimes we also write $G_{R_X}$ as $G_{R,X}$ or $G_{X}$. 
\end{defn}

%%%%%%%%%%%%%

One can easily rephrase the characterization given in Lemma \ref{L:basic} in terms of actions of structure groups
(cf., e.g., \cite{ESS, GM}). 

\begin{cor}
\label{C:char}
A map $R(x,y)=(\alpha_x(y), \beta_y(x))$ is a YBE solution on $X$, if and only if
\begin{itemize}
\item[(i)] $\alpha$ can be extended to a left action of $G_{R_X}$ on $X$, 
\item[(ii)] $\beta$ can be extended to a right action of $G_{R_X}$ on $X$, and 
\item[(iii)] the compatibility condition in Lemma \ref{L:basic} (iii) holds. 
\end{itemize}
 \end{cor}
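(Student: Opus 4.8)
The plan is to read the corollary directly off Lemma \ref{L:basic} by interpreting its conditions (i) and (ii) through the universal property of the structure group. Recall that $G_{R_X}$ is presented by the generating set $X$ subject to the relations $xy=\alpha_x(y)\beta_y(x)$. By the universal property of a group presentation (von Dyck's theorem), a set map sending each generator $x\in X$ to an element of a group $H$ extends (necessarily uniquely) to a homomorphism $G_{R_X}\to H$ if and only if the images satisfy all the defining relations. Under the standing non-degeneracy assumption each $\alpha_x$ and each $\beta_x$ is a bijection of $X$, so the natural assignments $x\mapsto\alpha_x$ and $x\mapsto\beta_x$ land in $\sym(X)$, and it makes sense to ask when they extend to genuine group (anti-)homomorphisms.

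First I would treat $\alpha$. A left action of $G_{R_X}$ on $X$ is exactly a homomorphism $G_{R_X}\to\sym(X)$, under which the word $xy$ acts as the composite $\alpha_x\circ\alpha_y$ and the word $\alpha_x(y)\beta_y(x)$ acts as $\alpha_{\alpha_x(y)}\circ\alpha_{\beta_y(x)}$. Thus, by the universal property, the assignment $x\mapsto\alpha_x$ extends to a left action precisely when the defining relation is respected, i.e.\ when $\alpha_x\alpha_y=\alpha_{\alpha_x(y)}\alpha_{\beta_y(x)}$ holds for all $x,y\in X$; this is exactly Lemma \ref{L:basic}(i).

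Next I would treat $\beta$, the only point needing care being that a right action corresponds to an anti-homomorphism (equivalently, a homomorphism into $\sym(X)^{\mathrm{op}}$). Writing $\rho_g$ for the map $x\mapsto x\cdot g$, the right-action axiom forces $\rho_{g_1g_2}=\rho_{g_2}\circ\rho_{g_1}$, so the word $xy$ is sent to $\beta_y\circ\beta_x$ and the word $\alpha_x(y)\beta_y(x)$ to $\beta_{\beta_y(x)}\circ\beta_{\alpha_x(y)}$. Hence $x\mapsto\beta_x$ extends to a right action of $G_{R_X}$ on $X$ if and only if $\beta_y\beta_x=\beta_{\beta_y(x)}\beta_{\alpha_x(y)}$ for all $x,y\in X$, which is Lemma \ref{L:basic}(ii). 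Since condition (iii) is verbatim the same in both statements, combining these two equivalences with Lemma \ref{L:basic} yields the corollary.

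The only real obstacle is bookkeeping rather than substance: one must keep the composition order straight (left action $=$ homomorphism, right action $=$ anti-homomorphism) and invoke non-degeneracy to guarantee that the generators map into $\sym(X)$, so that the extensions are by bijections and the targets are groups. With those conventions fixed, the equivalence is a direct translation and no computation beyond Lemma \ref{L:basic} is required.
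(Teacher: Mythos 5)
Your proof is correct and is precisely the argument the paper leaves implicit: the paper states the corollary without proof as an "easy rephrasing" of Lemma \ref{L:basic}, and your appeal to von Dyck's theorem for the presentation of $G_{R_X}$, together with the homomorphism/anti-homomorphism bookkeeping for left versus right actions, is exactly the intended translation.
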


 %%%%%%%%%%%%

\subsection{Constructing YBE solutions from old to new}
There are several known constructions of YBE solutions from old to new. For our purpose, we only introduce two below. 
The first one seems to be overlooked in the literature.
 
 \subsection*{$\blacktriangleright$ Dual of $R$} Let $R(x,y)=(\alpha_x(y),\beta_y(x))$ be a YBE solution on $X$. Define $R^\circ$ on $X^2$ by 
 \[
 R^\circ(x,y)=(\beta_x(y),\alpha_y(x))\qforal x,y\in X.
 \]
We call $R^\circ$ the \textit{dual of $R$}. It is also a YBE solution on $X$. Indeed,
 this can be seen by switching $x$ and $y$ in the first two identities, and $x$ and $z$ in the third one in Lemma \ref{L:basic}.
 We give it such a name because we `dualize' the process $xy=\alpha_x(y)\beta_y(x)$ in $G_{R_X}$ via
 $y\circ x=\beta_y(x)\circ \alpha_x(y)$ (by switching the factors on both sides).
 
 Clearly, $R^{\circ\circ}=R$.
 
 Let $\Phi: G_{R_X}\to G_{R^\circ_X}$ be defined via
 $\Phi(x):=x$ for $x\in X$ and $\Phi(xy):=y\circ x$ for all $x,y \in X$. Since $\Phi(xy)=\Phi(\alpha_x(y)\beta_y(x))$ for all $x,y\in X$, 
 $\Phi$ can be extended to an anti-isomorphism from $G_{R_X}$ to $G_{R^\circ_X}$.

 \subsection*{$\blacktriangleright$ Derived solution of $R$ \cite{ESS,Sol00}}
 Let $R(x,y)=(\alpha_x(y),\beta_y(x))$ be a YBE solution on $X$. 
Then 
\[
(x,y)\stackrel{R}\mapsto (\alpha_x(y),\beta_y(x))\stackrel{R}\mapsto \big(\alpha_{\alpha_x(y)}(\beta_y(x)), \beta_{\beta_y(x)}(\alpha_x(y))\big)
\] 
determines a  YBE solution
 \[
 (x,\alpha_x(y))\mapsto \big(\alpha_x(y), \alpha_{\alpha_x(y)}(\beta_y(x))\big),
 \] 
 namely,
 \[
 R': (x,y)\mapsto \big(y, \alpha_y(\beta_{\alpha_x^{-1}(y)}(x))\big).
 \] 
This solution $R'$ is called the \textit{derived solution of $R$}.

The \textit{derived structure group $A_{R_X}$ of $R$} is defined as
\[
A_{R_X}=\big\langle X: x\bullet y=y\bullet\alpha_y(\beta_{\alpha_x^{-1}(y)}(x))\text{ for all } x,y\in X
\big\rangle. 
\]
As $G_{R_X}$, $A_{R_X}$ is sometimes also written as $A_{R,X}$ or $A_X$. 

\begin{rem}
It is often useful to think that $A_{R_X}$ and $G_{R_X}$ have the same generator set $X$ with the relations
\[
x\bullet y=x\alpha_{x^{-1}}(y)\qforal x,y\in X,
\]
equivalently,
$
xy=x\bullet \alpha_x(y)
$
for all $x,y\in X$. 
\end{rem}

\begin{rem} 
\label{R:psi}

(i) If $R(x,y)=(\alpha_x(y), x)$, then $R'(x,y)=(y, \alpha_y(x))$. Namely, $R'=R^\circ$. 

(ii) As in \cite{Sol00}, one can also define another derived YBE solution 
\[
{}'\!R(x,y) =\big (\beta_x(\alpha_{\beta_y^{-1}(x)}(y)),x\big)\qforal x,y\in X.
\]

(iii) One can easily check the following:
 $R$ is symmetric $\Leftrightarrow$ ${}'\!R=R'$ $\Leftrightarrow$ $R^\circ$ is symmetric $\Leftrightarrow$ $A_{R_X}$ and $A_{R'_X}$ are abelian $\Leftrightarrow$ 
 $A_{R_X}$ and $A_{R^\circ_X}$ are abelian. 

\end{rem}

%%%%%%%%%
\subsection{A distinguished action of $G_{R_X}$ on $A_{R_X}$} 
Let $R(x,y)=(\alpha_x(y),\beta_y(x))$ be a YBE solution on $X$. 
By Corollary \ref{C:char}, both $\alpha$ and $\beta^{-1}$ can be extended to actions of $G_X$ on $X$. 
%%%%%%%%%%%
For our convenience, let 
\begin{align*}
\label{E:phi}
\phi_R(x,y)&:=\alpha_y(\beta_{\alpha_x^{-1}(y)}(x)),\\
\psi_R(x,y)&:= \beta_x(\alpha_{\beta_y^{-1}(x)}(y))  
\end{align*}
for all $x,y\in X$. 
Similar to \cite[Theorem 2.3]{Sol00}, one has the following. 

\begin{lem}
\label{L:inv}
$\phi$ is $G_{R_X}$-equivariant with respect to the action $\alpha$:
\[
\phi_R(\alpha_g(x),\alpha_g(y))=\alpha_g(\phi_R(x,y)) \qforal x,y\in X \text {and } g\in G_{R_X}.
\]
\end{lem}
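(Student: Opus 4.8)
The plan is to exploit that $G_{R_X}$ is generated by $X$, so that it suffices to verify the claimed identity when $g$ ranges over the generators $z\in X$. Concretely, let $H$ be the set of all $g\in G_{R_X}$ with $\phi_R(\alpha_g(x),\alpha_g(y))=\alpha_g(\phi_R(x,y))$ for all $x,y\in X$. I would first check that $H$ is a subgroup: if $g_1,g_2\in H$, then applying the identity for $g_1$ to the pair $(\alpha_{g_2}(x),\alpha_{g_2}(y))$ and then the identity for $g_2$ gives $g_1g_2\in H$, using that $\alpha$ is a left action (Corollary \ref{C:char}(i), so $\alpha_{g_1g_2}=\alpha_{g_1}\alpha_{g_2}$); and substituting $(\alpha_{g^{-1}}(x),\alpha_{g^{-1}}(y))$ into the identity for $g$ yields $g^{-1}\in H$. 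Since $e\in H$ trivially and $X$ generates $G_{R_X}$, it then remains only to prove $z\in H$ for each $z\in X$.

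So fix $x,y,z\in X$ and set $x'=\alpha_z(x)$, $y'=\alpha_z(y)$, and $a=\alpha_x^{-1}(y)$ (so $y=\alpha_x(a)$), all legitimate since each $\alpha_x$ is a bijection by non-degeneracy. The left-hand side is $\phi_R(x',y')=\alpha_{y'}\big(\beta_{\alpha_{x'}^{-1}(y')}(x')\big)$, and the only awkward ingredient is the subscript $\alpha_{x'}^{-1}(y')$. Applying condition (i) of Lemma \ref{L:basic} with the pair $(z,x)$ gives $\alpha_z\alpha_x=\alpha_{\alpha_z(x)}\alpha_{\beta_x(z)}$, hence $\alpha_{x'}^{-1}=\alpha_{\beta_x(z)}\alpha_x^{-1}\alpha_z^{-1}$ and therefore $\alpha_{x'}^{-1}(y')=\alpha_{\beta_x(z)}(a)$. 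Thus the left-hand side becomes $\alpha_{y'}\big(\beta_{\alpha_{\beta_x(z)}(a)}(\alpha_z(x))\big)$.

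Next I would expand the right-hand side. Condition (i) again, now with the pair $(z,y)$, gives $\alpha_z\alpha_y=\alpha_{\alpha_z(y)}\alpha_{\beta_y(z)}=\alpha_{y'}\alpha_{\beta_y(z)}$, so that $\alpha_z(\phi_R(x,y))=\alpha_z\alpha_y(\beta_a(x))=\alpha_{y'}\big(\alpha_{\beta_y(z)}(\beta_a(x))\big)$. Cancelling the common bijection $\alpha_{y'}$ and rewriting $\beta_y(z)=\beta_{\alpha_x(a)}(z)$, the equivariance identity for the generator $z$ reduces to
\[
\beta_{\alpha_{\beta_x(z)}(a)}(\alpha_z(x))=\alpha_{\beta_{\alpha_x(a)}(z)}(\beta_a(x)).
\]
Finally, I would observe that this is precisely the Compatibility Condition (iii) of Lemma \ref{L:basic} after substituting $z$, $x$, $a$ for its free variables $x$, $y$, $z$ respectively: matching subscripts and arguments term by term shows the two sides agree verbatim, so no further hypotheses are needed. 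Together with the subgroup reduction this settles the claim for all $g\in G_{R_X}$. The bulk of the work is the bookkeeping in the two reductions; the step I expect to be least obvious in advance is recognizing that, after cancelling $\alpha_{y'}$, what survives is exactly the compatibility condition under the relabeling $(x,y,z)\mapsto(z,x,a)$.
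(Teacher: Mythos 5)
Your proof is correct, but it takes a genuinely different route from the paper's. The paper proves the lemma by a two-line reduction to the dual solution: it observes that $\psi_{R^\circ}(x,y)=\phi_R(y,x)$, invokes \cite[Theorem 2.3]{Sol00} for $R^\circ$ (which gives equivariance of $\psi$), and translates back. You instead give a self-contained direct verification: the reduction to generators via the subgroup $H$ is sound (closure under products and inverses uses only that $\alpha$ is a left action of $G_{R_X}$ by bijections of $X$, which is Corollary \ref{C:char}(i), and that $\phi_R$ maps $X^2$ into $X$), and the computation for a generator $z\in X$ is right: Lemma \ref{L:basic}(i) applied to $(z,x)$ gives $\alpha_{\alpha_z(x)}^{-1}(\alpha_z(y))=\alpha_{\beta_x(z)}(\alpha_x^{-1}(y))$, applied to $(z,y)$ lets you peel off $\alpha_{y'}$ from both sides, and what remains,
\[
\beta_{\alpha_{\beta_x(z)}(a)}(\alpha_z(x))=\alpha_{\beta_{\alpha_x(a)}(z)}(\beta_a(x)),
\]
is exactly the Compatibility Condition of Lemma \ref{L:basic}(iii) under the substitution $(x,y,z)\mapsto(z,x,a)$ with $a=\alpha_x^{-1}(y)$; I checked the subscripts and they do match verbatim. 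What your approach buys is independence from the external reference: the paper outsources the essential identity to Soloviev's theorem, whereas your argument exposes that the lemma is nothing more than the compatibility condition in disguise, at the cost of some bookkeeping. The paper's route is shorter and also makes visible the duality $\psi_{R^\circ}(x,y)=\phi_R(y,x)$, which is reused conceptually elsewhere (Remark \ref{R:psi}).
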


\begin{proof}
%%%%%%%%%%%%%%%%%%%%%%%%%%%%%
Notice that $\psi_{R^\circ}(x,y)=\phi_R(y,x)$ for all $x,y\in X$. Now first apply \cite[Theorem 2.3]{Sol00} to $R^\circ$ and then use 
the relation between $R$ and $R^\circ$ to obtain the following:
\begin{align*}
&\ \alpha_{g^{-1}}\psi_{R^\circ}(x,y)
=\psi_{R^\circ}(\alpha_{g^{-1}}(x),\alpha_{g^{-1}}(y))\qforal x,y\in X, g\in G_{R^\circ_X}\\
\Rightarrow  &\  \alpha_{g^{-1}}\phi_{R}(y,x)
=\phi_{R}(\alpha_{g^{-1}}(y),\alpha_{g^{-1}}(x))\qforal x,y\in X, g\in G_{R^\circ_X}\\
\Rightarrow &\ \alpha_{g}\phi_{R}(y,x)
=\phi_{R}(\alpha_{g}(y),\alpha_{g}(x))\qforal x,y\in X, g\in G_{R_X}.
\end{align*}
We are done. 
\end{proof}
%%%%%%%%%%

Let $\aut_X(A_{R_X})$ be the group of all automorphisms of $A_{R_X}$ preserving $X$. 

\begin{prop}[\cite{Sol00}]
\label{P:inv}
Keep the above notation. 
The action $\alpha$ of $G_{R_X}$ on $X$ induces an action of $G_{R_X}$ on $A_{R_X}$ preserving $X$. That is, there is a group homomorphism from 
$G_{R_X}$ to $\aut_X(A_{R_X)}$.
\end{prop}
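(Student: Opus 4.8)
The plan is to exploit the universal property of the presentation of $A_{R_X}$. Recall that $A_{R_X}$ is the quotient of the free group $F(X)$ on the set $X$ by the normal subgroup $N$ generated by the relators
\[
r_{x,y} := x \bullet y \bullet \phi_R(x,y)^{-1} \bullet y^{-1}, \qquad x,y \in X,
\]
which encode the defining relations $x \bullet y = y \bullet \phi_R(x,y)$ (here $\phi_R(x,y)\in X$ is a single generator). For a fixed $g \in G_{R_X}$, the bijection $\alpha_g \colon X \to X$, which exists by Corollary \ref{C:char}, extends uniquely to a group endomorphism $\td\alpha_g$ of $F(X)$. This descends to an endomorphism of $A_{R_X}$ precisely when $\td\alpha_g(N) \subseteq N$, and since $\td\alpha_g$ is a homomorphism and $N$ is normal, it suffices to check that $\td\alpha_g(r_{x,y}) \in N$ for every $x,y \in X$.

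First I would verify this relation-preservation, which is where Lemma \ref{L:inv} does the essential work. Applying $\td\alpha_g$ to the relator and invoking the equivariance $\alpha_g(\phi_R(x,y)) = \phi_R(\alpha_g(x), \alpha_g(y))$ from Lemma \ref{L:inv}, one obtains
\[
\td\alpha_g(r_{x,y}) = \alpha_g(x) \bullet \alpha_g(y) \bullet \phi_R(\alpha_g(x), \alpha_g(y))^{-1} \bullet \alpha_g(y)^{-1} = r_{\alpha_g(x),\, \alpha_g(y)},
\]
which is again one of the defining relators and hence lies in $N$. Thus $\alpha_g$ induces an endomorphism of $A_{R_X}$, which we continue to denote $\alpha_g$, and which by construction maps $X$ bijectively onto $X$.

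Next I would promote this endomorphism to an automorphism and assemble the homomorphism. Since $\alpha$ is a left action of $G_{R_X}$ on $X$, we have $\alpha_g\alpha_h = \alpha_{gh}$ on $X$ and $\alpha_e = \id_X$; in particular $\alpha_{g^{-1}}$ is the set-theoretic inverse of $\alpha_g$ on $X$. The induced endomorphisms therefore agree on the generating set $X$ with $\alpha_{gh}$ and with $\id$ respectively, and because an endomorphism of $A_{R_X}$ is determined by its values on $X$, the identities $\alpha_g\alpha_h = \alpha_{gh}$ and $\alpha_{g^{-1}}\alpha_g = \alpha_g\alpha_{g^{-1}} = \id$ hold on all of $A_{R_X}$. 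Hence each $\alpha_g$ is invertible with inverse $\alpha_{g^{-1}}$, so $\alpha_g \in \aut_X(A_{R_X})$, and $g \mapsto \alpha_g$ is a group homomorphism $G_{R_X} \to \aut_X(A_{R_X})$.

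The main obstacle is the relation-preservation step, and it is precisely the content of Lemma \ref{L:inv}: without the $G_{R_X}$-equivariance of $\phi_R$ there is no reason for $\td\alpha_g$ to carry a defining relator to another defining relator, and the extension of $\alpha_g$ to $A_{R_X}$ would fail to be well-defined. Everything else is the formal bookkeeping supplied by the universal property of group presentations together with the action axioms for $\alpha$.
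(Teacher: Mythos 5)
Your proof is correct and follows essentially the same route as the paper: the heart of both arguments is the $G_{R_X}$-equivariance of $\phi_R$ from Lemma \ref{L:inv}, which shows that $\alpha_g$ carries the defining relation $x\bullet y=y\bullet\phi_R(x,y)$ to the relation at $(\alpha_g(x),\alpha_g(y))$, hence descends to $A_{R_X}$. You simply make explicit the universal-property bookkeeping and the verification that the induced maps are automorphisms forming a homomorphism, which the paper compresses into the phrase ``this implies that $\alpha_g$ can be extended to an element in $\aut_X(A_{R_X})$.''
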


\begin{proof}
%%%%%%%%%
Notice that for all $g\in G_{R_X}$
\begin{align*}
&\ x\bullet y=y\bullet \phi_R(x,y)\\
\Rightarrow &\ \alpha_g(x)\bullet \alpha_g(y)=\alpha_g(y)\bullet \phi_R(\alpha_g(x),\alpha_g(y))\ (\text{replacing } x,y\text{ by }\alpha_g(x), \alpha_g(y))\\
\Rightarrow &\ \alpha_g(x)\bullet \alpha_g(y)=\alpha_g(y)\bullet \alpha_g(\phi_R(x,y))\ (\text{by Lemma }\ref{L:inv}).
\end{align*}
This implies that $\alpha_g$ can be extended to an element in $\aut_X(A_{R_X})$, as desired. 
%%%%%%%%%
\end{proof}

By Proposition \ref{P:inv}, one has an action $\alpha: G_{R_X}\curvearrowright A_{R_X}$. 

%%%%%%%%%%%%%%%%%%%%%%%%%%%%%%%%%%%%%%%%%%%%%

\section{Affine actions on groups}

\label{S:affine}

For a given YBE solution, we associate to it a regular affine action (Proposition \ref{P:YBEaff}). This plays a 
vital role in Section \ref{S:classification}. Conversely, in Subsection \ref{SS:constructing}, we use the two constructions of affine actions described 
in Subsection \ref{SS:aff}  to construct new YBE solutions.

Let $A$ be a group. Denote by $\Aff(A)$ the semi-direct product 
\[
\Aff(A)= \aut(A)\ltimes A,
\]
where $(S,a)(T,b)=(ST,aS(b))$ for all $S,T\in \aut(A)$ and $a,b\in A$. $\Aff(A)$ acts on $A$ via $(S,a)b=aS(b)$.

\begin{defn}
Let $G$ and $A$ be groups. 
An affine action of $G$ on $A$ is a group homomorphism $\rho: G\to \Aff(A)$.
\end{defn}

By definition, any affine action $\rho: G\to \Aff(A)$ has the following form: 
\[
\rho_g(a)=b(g)\pi_g(a)\qforal g\in G\text{ and } a\in A,
\] 
where $\pi:G\to \aut(A)$ is a group homomorphism,  called the \textit{linear part} of $\rho$, and 
$b:G\to A$, called the \textit{translational part} of $\rho$, is a 1-cocycle with respect to $\pi$ in coefficient $A$:
\[
b(g_1g_2)=b(g_1)\pi_{g_1}(b(g_2))\qforal g_1,g_2\in G.
\]
We sometimes simply write $\rho=(\pi,b)$, and also write $b(g)$ as $b_g$ for convenience. 

Recall that a group action is called \textit{regular} if it is transitive and free.  

The following lemma should be known. But we include a proof below for completeness. 

\begin{lem}
An affine action $\rho=(\pi,b)$ of a group $G$ on a group $A$ is regular, if and only if $b$ is bijective. 
\end{lem}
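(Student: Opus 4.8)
The plan is to pin down the action on the distinguished point $e_A$, the identity of $A$, and then read off transitivity and freeness as surjectivity and injectivity of $b$ respectively. Since each $\pi_g$ is an automorphism it fixes $e_A$, so the defining formula $\rho_g(a)=b(g)\pi_g(a)$ gives $\rho_g(e_A)=b(g)$; thus the $G$-orbit of $e_A$ is exactly the image $b(G)$, which already links transitivity to surjectivity of $b$. The single computation that drives everything is the intertwining identity
\[
g\cdot b(h)=b(g)\pi_g(b(h))=b(gh)\qforal g,h\in G,
\]
which is nothing but the cocycle relation for $b$; it says that the orbit map $g\mapsto g\cdot e_A=b(g)$ carries left translation on $G$ to the $G$-action on $A$.

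For the \emph{if} direction, I would assume $b$ is bijective. Surjectivity of $b$ means $b(G)=A$, so every element of $A$ lies in the orbit of $e_A$ and the action is transitive. For freeness, fix $a\in A$ and write $a=b(h)$ using surjectivity; if $g\cdot a=a$, then the intertwining identity gives $b(gh)=b(h)$, whence $gh=h$ by injectivity, i.e. $g=e_G$. Hence every stabilizer is trivial and the action is regular.

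For the converse, I would assume $\rho$ is regular. Transitivity forces the orbit $b(G)$ of $e_A$ to be all of $A$, so $b$ is surjective. For injectivity, suppose $b(g_1)=b(g_2)$; then $g_1\cdot e_A=g_2\cdot e_A$, and applying $g_2^{-1}$ (using that $\rho$ is a genuine left $G$-action, since $\rho$ is a homomorphism into $\Aff(A)$) yields $(g_2^{-1}g_1)\cdot e_A=e_A$, so freeness gives $g_2^{-1}g_1=e_G$, i.e. $g_1=g_2$. Thus $b$ is bijective.

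The only point needing care is bookkeeping rather than any genuine difficulty: one must keep straight that $\rho$ produces a bona fide left action of $G$ on $A$ (so that stabilizers are well-behaved and one may multiply relations by inverses), and recognize that the cocycle identity is \emph{precisely} the intertwining statement. Everything else is a direct translation between the set-theoretic notions transitive/free and the algebraic notions surjective/injective.
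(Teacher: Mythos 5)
Your proof is correct and follows essentially the same route as the paper's: both evaluate the action at the identity $e_A$ to get $\rho_g(e_A)=b(g)$, use the cocycle relation in the form $\rho_g(b(h))=b(gh)$, and translate transitivity/freeness into surjectivity/injectivity of $b$. The only cosmetic difference is that you phrase freeness via trivial stabilizers and reduce to the point $e_A$ using the group-action structure, whereas the paper verifies the two-element condition $\rho_{g_1}(x)=\rho_{g_2}(x)\Rightarrow g_1=g_2$ directly; these are equivalent.
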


\begin{proof} 
$(\Rightarrow)$: Since $\rho$ is regular, for arbitrary $x,y$ in $A$ there is a unique $g\in G$ such that $\rho_g(x)=y$. 
Letting $x=e$ and $y\in A$ arbitrary shows that $b$ is surjective. 

Now suppose that $b(g_1)=b(g_2)$ for some $g_1,g_2\in G$. Then $\rho_{g_1}(e)=b(g_1)\pi_{g_1}(e)=b(g_2)\pi_{g_2}(e)=\rho_{g_2}(e)$. So $g_1=g_2$ as $\rho$ is free. 
Thus $b$ is injective.  

$(\Leftarrow)$: Let $x,y\in A$. Since $b$ is bijective, there is a unique $h_0\in G$ such that $b(h_0)=x$, and further a unique $g\in G$ such that $b(gh_0)=y$.
Then $\rho_g(x)=b(g)\pi_g(x)=b(g)\pi_g(b(h_0))=b(gh_0)=y$. Thus $\rho$ is transitive.

To show that $\rho$ is free, suppose that there are $g_1,g_2\in G$ such that  $\rho_{g_1}(x)=\rho_{g_2}(x)$ for some $x\in A$.  Then 
$b(g_1)\pi_{g_1}(x)=b(g_2)\pi_{g_2}(x)$. Since $b$ is surjective, there is $g\in G$ such that $b(g)=x$. Hence 
$b(g_1)\pi_{g_1}(b(g))=b(g_2)\pi_{g_2}(b(g))$, i.e., $b(g_1g)=b(g_2g)$. But $b$ is injective, $g_1g=g_2g$ and so $g_1=g_2$. 
Therefore, $\rho$ is free. 
\end{proof}

%%%%%%%%%%%%%%%

\begin{defn}
\label{D:conj}
Let $\rho^i$ be an affine action of a group $G$ on a group $A_i$ ($i=1,2$).  A group homomorphism $\varphi:A_1\to A_2$ is said to be $G$-equivariant
relative to $(\rho^1,\rho^2)$ if 
\begin{align}
\label{E:rho}
\varphi\circ\rho^1_g=\rho^2_g \circ \varphi\qforal g\in G.
\end{align}
That is, for every $g\in G$, the following diagram commutes:
\begin{align*}
		\xymatrix{
		A_1 \ar[r]^{\rho^1_g} \ar[d]_{\varphi} & A_1\ar[d]^{\varphi}&   \\
		A_2 \ar[r]_{\rho^2_g} &A_2.  &
		}
\end{align*}
If, furthermore, the above $f$ is bijective, then $\rho^1$ and $\rho^2$ are said to be conjugate. 
\end{defn}

%%%%%

\begin{rem}
\label{R:equ}
(i)
Let $\rho^i=(\pi^i,b^i)$ ($i=1,2$). 
It is easy to see that \eqref{E:rho} is equivalent to 
\begin{align*}
\varphi \circ\pi^1_g&=\pi^2_g\circ \varphi,\\
b^2_g&=\varphi\circ b^1_g
\end{align*}
for all $g\in G$. So, in particular, $\varphi$ is also $G$-equivariant relative to $(\pi^1,\pi^2)$. 

(ii) If $b^1$ is surjective, then using the definition of 1-cocycles, it is easy to see that the second identity in (i) above determines the first one. In fact, from the second one has for all $g,h\in G$
\begin{align*}
b^2_{gh}=\varphi(b^1_{gh})
\Rightarrow &\ b^2_g\pi_{g}^2(b^2_h)=\varphi(b^1_g)\varphi(\pi_g^1(b^1_h))\\
\Rightarrow &\ \pi_{g}^2(b^2_h)=\varphi(\pi_g^1(b^1_h))\ (\text{as }b_g^2=\varphi(b_g^1))\\
\Rightarrow &\ \pi_{g}^2(\varphi(b^1_h))=\varphi(\pi_g^1(b^1_h))\ (\text{as }b_h^2=\varphi(b_h^1))\\
\Rightarrow &\ \pi_{g}^2\circ \varphi=\varphi\circ \pi_g^1\ (\text{as } b^1(G)=A_1).
\end{align*}
\end{rem}

\subsection{Affine actions associated to YBE solutions}
This subsection shows why we are interested in affine actions. 

\begin{prop}[and \textbf{Definition}]
\label{P:YBEaff}
Any YBE solution $R$ on $X$ induces a regular affine action $\rho^X$ of $G_X$ on $A_X$. 

The action $\rho^X$ is called the {\rm affine action associated to $R_X$}, also denoted as $\rho_{R_X}$ or 
even just $\rho$ if the context is clear. 
\end{prop}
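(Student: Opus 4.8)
The plan is to take the linear part of the affine action to be the action $\alpha : G_X \to \aut_X(A_X)$ furnished by Proposition \ref{P:inv}, and the translational part to be the map that is the identity on the common generating set $X$. Concretely, I would define $\rho : G_X \to \Aff(A_X) = \aut(A_X) \ltimes A_X$ on generators by
\[
\rho(x) = (\alpha_x, x) \qforal x \in X,
\]
where $\alpha_x \in \aut_X(A_X)$ and the second coordinate $x$ is regarded as a generator of $A_X$. To see that this is a legitimate affine action, I must check that $\rho$ respects the defining relations $xy = \alpha_x(y)\beta_y(x)$ of $G_X$, after which it extends uniquely to a group homomorphism, i.e.\ an affine action.

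The verification splits into the two coordinates of $\Aff(A_X)$. Computing in the semidirect product,
\begin{align*}
\rho(x)\rho(y) &= (\alpha_x\alpha_y,\, x \bullet \alpha_x(y)),\\
\rho(\alpha_x(y))\rho(\beta_y(x)) &= \big(\alpha_{\alpha_x(y)}\alpha_{\beta_y(x)},\, \alpha_x(y) \bullet \alpha_{\alpha_x(y)}(\beta_y(x))\big).
\end{align*}
The two linear parts agree by Lemma \ref{L:basic}(i). For the translational parts, I would apply the defining relation $u \bullet v = v \bullet \phi_R(u,v)$ of $A_X$ with $u = x$ and $v = \alpha_x(y)$; since $\phi_R(x,\alpha_x(y)) = \alpha_{\alpha_x(y)}\big(\beta_{\alpha_x^{-1}(\alpha_x(y))}(x)\big) = \alpha_{\alpha_x(y)}(\beta_y(x))$, this yields exactly $x \bullet \alpha_x(y) = \alpha_x(y) \bullet \alpha_{\alpha_x(y)}(\beta_y(x))$. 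Hence both coordinates match, $\rho$ is well defined, and writing $\rho = (\pi, b)$ we read off $\pi = \alpha$ together with a translational part $b$ satisfying $b(x) = x$ for $x \in X$; being a component of a homomorphism into $\Aff(A_X)$, $b$ is automatically a $1$-cocycle for $\alpha$.

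It remains to prove that $\rho$ is regular, which by the lemma preceding Definition \ref{D:conj} is equivalent to bijectivity of $b$, and this is the crux of the argument. I expect the main obstacle to lie here, since $b$ is only a cocycle (not a homomorphism) and bijectivity cannot be read off the presentations directly. The approach I would take is to exhibit a two-sided inverse using the complementary relation $x \bullet y = x\,\alpha_{x^{-1}}(y)$ recorded in the Remark following the definition of $A_X$: this relation rewrites $\bullet$-products back into $G_X$-products and should produce, on the level of words in $X$, the rewriting inverse to the one encoded by $b$. To organize this, I would grade both $G_X$ and $A_X$ by total generator-length $\ell$ (well defined because every defining relation of each group equates two words of length two) and observe that $b$ is length-preserving, as each $\alpha_g$ permutes $X$ and hence preserves $\ell$ on $A_X$; bijectivity then reduces to checking that the inverse assignment is well defined on $A_X$ and inverts $b$ within each graded component. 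Since this is precisely the bijective $1$-cocycle already constructed in the work of Soloviev and Etingof--Schedler--Soloviev \cite{Sol00, ESS}, an alternative is to invoke their result directly. With $b$ bijective, the cited lemma gives that $\rho = (\alpha, b)$ is regular, completing the proof.
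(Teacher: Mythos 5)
Your proposal is correct and follows essentially the same route as the paper: both define $\rho$ on generators by pairing $\alpha_x$ with $x\in A_X$ (the paper packages this as $x\mapsto(x,x)\in G_X\ltimes_\alpha A_X$ and then projects, which is the same homomorphism into $\Aff(A_X)$ after applying Proposition \ref{P:inv}), verify the defining relation of $G_X$ coordinatewise via Lemma \ref{L:basic}(i) and the defining relation of $A_X$, and defer the bijectivity of the resulting $1$-cocycle $b$ to \cite{Sol00}. The only difference is cosmetic, and your fallback to Soloviev for regularity matches the paper's own level of detail exactly.
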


\begin{proof}
The proof is completely similar to \cite[Theorem 2.5]{Sol00}. We only sketch it here. By Proposition \ref{P:inv}, there is an action $\alpha: G_X\curvearrowright A_X$. 

\underline{Step 1}: Extend the mapping 
\[
\rho: X\to G_X\ltimes_\alpha A_X, \ x\mapsto (x,x)
\]
to a group homomorphism 
\[
\rho_G: G_X\to G_X\ltimes_\alpha A_X.
\]

To do so, one needs to check that 
\[
\rho(x)\rho(y)=\rho(\alpha_x(y))\rho(\beta_y(x))\qforal x,y\in X.
\]
But 
\[
\rho(x)\rho(y)=(x,x)(y,y)=(xy,x\bullet \alpha_x(y)),
\]
and similarly  
\[
\rho(\alpha_x(y))\rho(\beta_y(x))=\big(\alpha_x(y)\beta_y(x), \alpha_x(y)\bullet\alpha_{\alpha_x(y)}(\beta_y(x))\big).
\]
They are obviously equal. 

\underline{Step 2}: 
Let $p: G_X\ltimes_\alpha A_X\to A_X$ be the second projection to $A_X$, and let $b:=p\circ \rho_G$.
Then $b$ is a 1-cocycle with respect to the action $G_X\stackrel{\alpha}\curvearrowright A_X$:
\[
b(gh)=b(g)\bullet \alpha_g(b(h)) \qforal g,h\in G_X.
\]
In fact, for all $g,h\in X$,
\[
b(gh)=p\rho_G(gh)=p\rho((g,g)(h,h))=p(gh,g\bullet \alpha_g(h))=g\bullet\alpha_g(h),
\]
and 
\[
b(g)\bullet \alpha_g(b(h))=p\rho_G(g)\bullet \alpha_g(p\rho_G(h))=g\bullet \alpha_g(h).
\]

\underline{Step 3}: Check that $b$ is bijective (cf. \cite[Theorem 2.5]{Sol00}).
\end{proof}

\begin{rem}
In the sequel, we will frequently use that the simple fact that $b(x)=x$ for all $x\in X$ in the associated affine action $\rho^X=(\alpha,b)$ obtained from 
Proposition \ref{P:YBEaff}. 
\end{rem}

\subsection{Two constructions of affine actions}
\label{SS:aff}

In this subsection, we construct two new affine actions from given ones.

\subsection*{$1^\circ$ Lifting}
This generalizes a construction given in \cite{Bac14, BCJ15}, which plays key roles there.

Let $A$ and $H$ be two groups, and $\theta:H\to A$ a homomorphism.   
Suppose that $\rho=(\pi,b)$ is a regular affine action of $G$ on $A$, and that $\sigma$ is an action of $G$ on $H$,
such that $\theta$ is $G$-equivariant relative to $(\sigma, \pi)$:
\[
\theta\circ\sigma_g=\pi_g \circ\theta\qforal g\in G. 
\]
Introduce a new multiplication $\cdot$ on $H$ via 
\begin{align}
\label{E:mul}
x\cdot y:=x\sigma_{b^{-1}\circ\theta(x)}(y) \qforal x,y\in H. 
\end{align}
Then the \textit{lifting of $\rho$ from $A$ to $H$} is defined as 
\[ 
\tilde\rho_x(y)=x\cdot y \qforal x,y\in H. 
\]

\begin{con}
The lifting $\tilde \rho$ is an affine action of $(H,\cdot)$ on $H$. Furthermore, $\theta$ is $(H,\cdot)$-equivariant relative to $(\tilde \rho,  \rho\circ b^{-1}\circ\theta)$. 
\end{con}

Pictorially, one can summarize the above as follows: for all $g\in G$ and $h\in (H,\cdot)$
\begin{align*}
		\xymatrix{
		H \ar[r]^{\sigma_g} \ar[d]_{\theta} & H\ar[d]^{\theta}&   \\
		 A\ar[r]_{\pi_g} &A &
		}
\quad
\xymatrix{
		&\rightsquigarrow&\\
		&\rightsquigarrow&\\
		&&
		}
\quad
\xymatrix{
		H \ar@{-->}[rr]^{\tilde\rho_h} \ar[d]_{\theta} && H\ar[d]^\theta&   \\
		 A\ar[rr]_{\rho_{b^{-1}\circ\theta(h)}} &&A&\\  
		}
\end{align*}

\begin{proof} 
One can show that $(H, \cdot)$ is indeed a group: $\cdot$ is closed and associative, the identity is (still) $e$, and the inverse of $x$ in $(H,\cdot)$ is $\sigma_{b^{-1}\circ\theta(x)}(x)$. 
The verification is tedious and left to the reader. 

%%%%%%%%%%
Also, $\tilde \rho$ is an affine action of $(H,\cdot)$ on $H$. In fact, 
\begin{align*}
\tilde \rho_{x_1\cdot x_2}(y)
&=(x_1\cdot x_2)\sigma_{b^{-1}\circ\theta(x_1\cdot x_2)}(y)\\
&=x_1\sigma_{b^{-1}\circ\theta(x_1)}(x_2)\sigma_{b^{-1}\circ\theta(x_1\sigma_{b^{-1}\circ\theta(x_1)}(x_2))}(y)\ (\text{by }\eqref{E:mul})\\
&=x_1\sigma_{b^{-1}\circ\theta(x_1)}(x_2)\sigma_{b^{-1}(\theta(x_1) \theta(\sigma_{b^{-1}\circ\theta(x_1)}(x_2)))}(y)\ (\text{as }{\theta} \text{ is a homomorphism})\\
&=x_1\sigma_{b^{-1}\circ\theta(x_1)}(x_2)\sigma_{b^{-1}(\theta(x_1) \pi_{b^{-1}\circ\theta(x_1)}\theta(x_2)))}(y)\ (\text{as }{\theta} \text{ is }G\text{-equivariant})\\
&=x_1\sigma_{b^{-1}\circ\theta(x_1)}(x_2)\sigma_{b^{-1}(\theta(x_1))b^{-1}\circ\theta(x_2)}(y)\ (\text{as } b \text{ is a 1-cocycle w.r.t }\pi)\\ 
&=x_1\sigma_{b^{-1}\circ\theta(x_1)}(x_2\sigma_{b^{-1}\circ\theta(x_2)}(y))\ (\text{as } \theta \text{ is an action})\\
&=x_1\sigma_{b^{-1}\circ\theta(x_1)}(\rho_{x_2}(y))\ (\text{by }\eqref{E:mul})\\
&=\tilde\rho_{x_1}(\tilde\rho_{x_2}(y))\ (\text{by }\eqref{E:mul}).
\end{align*}

Furthermore, $\rho\circ b^{-1}\circ \theta$ is an affine action of $(H,\cdot)$ on $A$. For this, since 
$\theta$ is $G$-equivariant for $(\sigma,\pi)$ and $b$ is 1-cocycle with respective to $\pi$, one has 
\begin{align*}
b^{-1}\circ\theta(h_1\cdot h_2)
&=b^{-1}(\theta(h_1)\theta\circ\sigma_{b^{-1}(\theta(h_1))}(h_2))\\
&=b^{-1}(\theta(h_1)\pi_{b^{-1}(\theta(h_1)}(\theta(h_2)))\\
&=b^{-1}\circ \theta(h_1)b^{-1}\circ \theta(h_2).
\end{align*}
Hence, for all $h_1,h_2\in H$ and $a\in A$, we get 
\begin{align*}
\rho_{b^{-1}\circ\theta(h_1\cdot h_2)}(a)
=b(b^{-1}\circ \theta(h_1)b^{-1}\circ \theta(h_2))\pi_{b^{-1}\circ \theta(h_1)b^{-1}\circ \theta(h_2)}(a)
\end{align*}
and 
\begin{align*}
&\ \rho_{b^{-1}\circ \theta(h_1)}\rho_{b^{-1}\circ\theta(h_2)}(a)\\
=&\ \rho_{b^{-1}\circ \theta(h_1)}(b(b^{-1}\circ\theta(h_2)\pi_{b^{-1}\circ\theta(h_2)}(a)\\
=&\ b(b^{-1}\circ \theta(h_1))\pi_{b^{-1}\circ\theta(h_1)}(b(b^{-1}\circ\theta(h_2))\pi_{b^{-1}\circ\theta(h_2)}(a))\\
=&\ b(b^{-1}\circ \theta(h_1))\pi_{b^{-1}\circ\theta(h_1)}(b(b^{-1}\circ\theta(h_2)))\pi_{b^{-1}\circ\theta(h_1)b^{-1}\circ\theta(h_2)}(a).
\end{align*}
This implies 
\[
\rho_{b^{-1}\circ\theta(h_1\cdot h_2)}=\rho_{b^{-1}\circ \theta(h_1)}\rho_{b^{-1}\circ\theta(h_2)}
\]
as $b$ is a 1-cocycle with respect to $\pi$. 

Using the property that $\theta$ is $G$-equivariant relative to $(\sigma,\pi)$ again, we have for all $x,z\in H$
\begin{align*}
\theta (\tilde \rho_z(x))
&=\theta(z\sigma_{b^{-1}\circ\theta(z))}(x)=\theta(z)\theta(\sigma_{b^{-1}\circ\theta(z)}(x))\\
&=b b^{-1}(\theta(z))\pi_{b^{-1}\circ\theta(z)}(\theta(x))\\
&=\rho_{b^{-1}\circ\theta(z)}(\theta(x)).
\end{align*}
Thus  
$
\theta \circ \tilde \rho_z=\rho_{b^{-1}\circ\theta(z)}\circ \theta,
$
as desired. 
\end{proof}

%%%%%%%%%%%%%%%%%%%%%%%%%%

\subsection*{$2^\circ$ Semi-direct product} 

Let $\rho$  be an affine action of $G$ on $A$, and $\tilde\rho=(\tilde\pi, \tilde b)$ be a regular affine action of $\tilde G$ on $\tilde A$. Suppose $\theta:  G\curvearrowright \tilde G$ 
is an action of $G$ on $\tilde G$ such that 
\begin{align}
\label{E:asemi}
\theta_g(\tilde b^{-1}\tilde\pi_{h}\tilde b)=(\tilde b^{-1}\tilde \pi_{\theta_g(h)}\tilde b) \theta_g \qforal g\in G, h\in \tilde G. 
\end{align}

Then the \textit{semi-direct product of $\rho$ and $\tilde \rho$ via $\theta$} is defined as 
\begin{align*}
%\nonumber
\rho\ltimes_\theta\tilde\rho: & G\ltimes_\theta\tilde G\to \Aff(A\times \tilde A)\\
%\label{E:semi}
&(g,h)\mapsto (\rho_g, \tilde \rho_{h}\circ\tilde b\circ \theta_g \circ \tilde b^{-1}).
\end{align*}

\begin{con}
The semi-direct product $\rho\ltimes_\theta\tilde\rho$ is an affine action of $G\ltimes_\theta\tilde G$ on $A\times \tilde A$. 
\end{con}

\begin{proof}
First notice that \eqref{E:asemi} guarantees that the mapping 
\[
(g,h)\mapsto (\pi_{g},\pi_{h}\tilde b\circ \theta_{g}\circ \tilde b^{-1})
\]
is a group homomorphism from $G\ltimes_\theta \tilde G$ to $\aut(A\times \tilde A)$. 
The tedious verification is left to the reader.

We now show the following identity: 
\begin{align}
\label{E:theta}
\theta_g(\tilde b^{-1}\tilde\rho_{h}\tilde b)=(\tilde b^{-1}\tilde \rho_{\theta_g(h)}\tilde b) \theta_g \qforal g\in G\text{ and } h\in \tilde G. 
\end{align}
In fact, one has 
\begin{align*}
&\ \theta_g(h_1h_2)=\theta_g(h_1)\theta_g(h_2)\qforal g\in G, h_1,h_2\in \tilde G\\
\Rightarrow&\  \tilde b(\theta_g(h_1h_2))=\tilde b(\theta_g(h_1)\theta_g(h_2))\\
\Rightarrow&\  \tilde b(\theta_g(\tilde b^{-1}(\tilde b(h_1)\tilde\pi_{h_1}(\tilde b(h_2))))=\tilde b(\theta_g(h_1))\tilde \pi_{\theta_g(h_1)}(\tilde b \theta_g(h_2))
\ (\text{as }\tilde b \text{ is a 1-cocycle})\\
\Rightarrow&\  \tilde b\theta_g\tilde b^{-1}\tilde\rho_{h_1}(\tilde b(h_2))=\tilde \rho_{\theta_g(h_1)}(\tilde b(\theta_g(h_2)))
\ (\text{by the definition of }\tilde\rho)\\
\Rightarrow&\  \theta_g(\tilde b^{-1}\tilde\rho_{h_1}\tilde b)=(\tilde b^{-1}\tilde \rho_{\theta_g(h_1)}\tilde b) \theta_g.
\end{align*}

Set $\Gamma:= \rho\ltimes_\theta\tilde\rho$. In order to show that $\Gamma$ is an affine action, it suffices to check that 
\[
\Gamma_{(g,h)(g',h')}=\Gamma_{(g,h)}\Gamma_{(g',h')}\qforal g,g'\in G, h,h'\in\tilde G.
\]
For this, let $y\in G$ and $t\in \tilde G$. We have 
\begin{align*} 
\Gamma_{(g,h)(g',h')}(y,t)
&=\Gamma_{(gg',h\theta_g(h')}(y,t)\\
&=\big(\rho_{gg'}(y),\tilde\rho_{h\theta_g(h')}\,\tilde b\,\theta_{gg'} \tilde b^{-1}(t)\big)\\
&=\big(\rho_{gg'}(y),\tilde\rho_{h\theta_g(h')}\,\tilde b\,\theta_{g}\theta_{g'}\tilde b^{-1}(t)\big)\\
&=\big(\rho_{gg'}(y),\tilde\rho_{h} \tilde b \tilde b^{-1}\tilde\rho_{\theta_g(h')}\,\tilde b\,\theta_{g} \theta_{g'}\tilde b^{-1}(t)\big)\\
&=\big(\rho_{gg'}(y),\tilde\rho_{h} \tilde b \theta_g\tilde b^{-1}\tilde\rho_{h'} \,\tilde b\, \theta_{g'}\tilde b^{-1}(t)\big)\ (\text{by }\eqref{E:theta}),
\end{align*}
and 
\begin{align*} 
\Gamma_{(g,h)}\Gamma_{(g',h')}(y,t)
&=\Gamma_{(g,h)}\big(\rho_{g'}(y),\tilde \rho_{h'}\tilde b\theta_{g'}\tilde b^{-1}(t)\big)\\
&=\big(\rho_g\rho_{g'}(y), \tilde \rho_h\tilde b\theta_g\tilde b^{-1}(\tilde \rho_{h'}\tilde b\theta_{g'}\tilde b^{-1}(t))\big).
\end{align*}
We are done. 
\end{proof}

When $\theta$ is the trivial action, then the condition \eqref{E:asemi} is redundant and the corresponding affine action is just the direct product of $\rho$ and $\tilde\rho$. 

An application of the above semi-direct product construction is given in the appendix.

%%%%%%%%%%%%%%%%%%%%%%%%%%%%%%

\subsection{Constructing YBE solutions} 
\label{SS:constructing}

Let us first recall the following result. 

\begin{thm}\cite{LYZ00}
\label{T:gYBE}
Let $G$ be a group. Then following two groups of data are equivalent: 
\begin{enumerate}
\item 
There is a pair of left-right actions $(\alpha,\beta)$ of the group $G$ on $G$, which is compatible (i.e., $gh=\alpha_g(h)\beta_h(g)$ for all $g,h$ in $G$). 

\item There is a regular affine action $\rho=(\pi,b)$ of $G$ on some group $A$. 
\end{enumerate}
\end{thm}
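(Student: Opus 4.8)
The plan is to prove the two implications separately, in each case transporting one group structure to the other along the bijection that links them.

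For $(1)\Rightarrow(2)$ I would work directly on the underlying set of $G$. First I record the normalizations $\alpha_e=\beta_e=\id$ and $\alpha_g(e)=e$, which drop out of the compatibility relation $gh=\alpha_g(h)\beta_h(g)$ together with the action axioms. The engine of this direction is the single identity
\[
\alpha_g(uv)=\alpha_g(u)\,\alpha_{\beta_u(g)}(v)\qforal g,u,v\in G,
\]
which I would obtain by expanding the associative product $g(uv)=(gu)v$: writing $gu=\alpha_g(u)\beta_u(g)$, pushing the right factor past $v$ via $\beta_u(g)v=\alpha_{\beta_u(g)}(v)\beta_v(\beta_u(g))$, and using the right-action law $\beta_v\beta_u=\beta_{uv}$ to cancel the common tail $\beta_{uv}(g)$ against the one-step factorization $g(uv)=\alpha_g(uv)\beta_{uv}(g)$. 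With this in hand I define a second multiplication on the set $G$ by $a\bullet c:=a\,\alpha_{a^{-1}}(c)$ and set $A:=(G,\bullet)$, $\pi:=\alpha$, and $b:=\id_G$. The cocycle identity is then immediate, since $b(gh)=gh=g\bullet\alpha_g(h)=b(g)\bullet\pi_g(b(h))$, and $b$ is trivially bijective.

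It remains to check that $A=(G,\bullet)$ is a group and that each $\alpha_g$ lies in $\aut(A)$; both reduce, via the displayed identity, to elementary consequences of compatibility. For instance, associativity comes down to the equality $\beta_c(a^{-1})c^{-1}=\alpha_{a^{-1}}(c)^{-1}a^{-1}$, which is just $a^{-1}c=\alpha_{a^{-1}}(c)\beta_c(a^{-1})$ rearranged, and the homomorphism property $\alpha_g(a\bullet c)=\alpha_g(a)\bullet\alpha_g(c)$ reduces similarly to $\beta_a(g)a^{-1}=\alpha_g(a)^{-1}g$, i.e. to $ga=\alpha_g(a)\beta_a(g)$. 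Since $\pi=\alpha$ is a homomorphism into $\aut(A)$ (the left-action axiom) and $b$ is a bijective $1$-cocycle, $\rho=(\pi,b)$ is a regular affine action of $G$ on $A$.

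For $(2)\Rightarrow(1)$ I would run the construction in reverse. Given $\rho=(\pi,b)$ with $b$ bijective, define $\alpha_g:=b^{-1}\pi_g b$, a left action of $G$ on the set $G$ because $\pi$ is a homomorphism and $b$ a bijection, and then define $\beta_h(g):=\alpha_g(h)^{-1}gh$, so that compatibility $gh=\alpha_g(h)\beta_h(g)$ holds by construction. The only substantive point is that $\beta$ is a right action, i.e. $\beta_{h_1}\beta_{h_2}=\beta_{h_2h_1}$. Cancelling the common factors, this boils down to $\alpha_g(h_2h_1)=\alpha_g(h_2)\,\alpha_{g'}(h_1)$ with $g'=\beta_{h_2}(g)$; applying $b$, expanding $b(h_2h_1)=b(h_2)\pi_{h_2}(b(h_1))$ and the right-hand product by the cocycle rule, everything collapses to $\pi_{\alpha_g(h_2)}\pi_{g'}=\pi_{\alpha_g(h_2)g'}$ evaluated at $b(h_1)$, and $\alpha_g(h_2)g'=\alpha_g(h_2)\beta_{h_2}(g)=gh_2$ by compatibility, so the homomorphism property of $\pi$ closes the argument. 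I would then note that the two constructions are mutually inverse, which is routine from the explicit formulas and gives the asserted equivalence.

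The main obstacle, in both directions, is not conceptual but the bookkeeping needed to match the two group laws: the content is packaged into the identity $\alpha_g(uv)=\alpha_g(u)\alpha_{\beta_u(g)}(v)$ (and, dually, $\beta_z(xy)=\alpha_{\beta_{\alpha_y(z)}(x)}(\beta_z(y))\beta_{yz}(x)$) and into the single relation $\alpha_g(a)\beta_a(g)=ga$, with the real risk being keeping the non-commuting factors and the two products $\cdot$ and $\bullet$ straight. I expect the verification that $(G,\bullet)$ satisfies the group axioms to be the most error-prone step, so I would isolate the key identity first and reuse it for associativity, inverses, and the automorphism property.
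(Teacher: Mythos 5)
Your proposal is correct and takes essentially the same route as the paper's (sketched) proof: for $(1)\Rightarrow(2)$ you transport the group law to $A=(G,\bullet)$ via $a\bullet c=a\,\alpha_{a^{-1}}(c)$ so that $b=\id$ becomes a bijective $1$-cocycle, and for $(2)\Rightarrow(1)$ you set $\alpha_g=b^{-1}\pi_g b$ and $\beta_h(g)=\alpha_g(h)^{-1}gh$, which is exactly the construction the paper records. The only difference is that you supply the verifications (organized around the identity $\alpha_g(uv)=\alpha_g(u)\,\alpha_{\beta_u(g)}(v)$) that the paper leaves to \cite{LYZ00}.
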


 \begin{proof}
This is proved in \cite{LYZ00}. Since the idea of the proof will be useful later, we sketch it below. 
 
 (i)$\Rightarrow$(ii):  
 Let $A:=G$ as sets but the multiplication $\odot$ on $A$ is given by 
\[
 g\odot h=g\alpha_{g^{-1}}(h)\qforal g,h\in G, 
 \]
 namely,
  \[
 g h=g\odot\alpha_g(h)\qforal g,h\in G. 
 \]
 This implies that the identity mapping $\id$ is a (bijective) 1-cocycle with respect to $\alpha$. 
 
 (ii)$\Rightarrow$(i):
Set 
\[
\alpha_g(h):=b^{-1}\circ\pi_g\circ b(h)\quad\text{and}\quad \beta_h(g):=\alpha_g(h)^{-1}gh 
\]
for all $g,h \in G$. 
\end{proof}

\begin{rem}
\label{R: extension}
(i) Let $G$ and $A$ be groups. Given a regular affine action $\rho$ of $G$ on $A$, by Theorem \ref{T:gYBE} and  \cite[Corollary 1]{LYZ00}, one obtains a YBE solution on $G$ given by 
$R(g,h)=(\alpha_g(h),\beta_h(g))$ for all $g,h\in G$. 

(ii) Let $R$ be a YBE solution on $X$, and $\rho^X$ be its associated regular affine action of $G_X$ on $A_X$ (see Proposition \ref{P:YBEaff}). From (i) above, there is a YBE solution $\bar{R}$ on $G_X$. From its construction,  one can see that this is nothing but the universal extension of $R$ mentioned in \cite[Theorem 4.1]{LYZ00}. 

\end{rem}

\begin{rem}
This remark shows that there is a natural generalization of the relation $\beta_h(g)=\alpha^{-1}_{\alpha_g(h)}(g)$ holding for symmetric YBE solutions (cf. Lemma \ref{L:basic}). 

Let us return to the proof of (i)$\Rightarrow$(ii) in Theorem \ref{T:gYBE}. 
The property of $b:=\id$ being a 1-cocycle with respect to $\alpha$ gives 
\[
g\odot h=g\alpha_{g^{-1}}(h)=g\alpha_g^{-1}(h)\qforal g,h\in G,
\]
which implies 
\[
gh=g\odot\alpha_g(h)\qforal g,h\in G. 
\]
In particular\footnote{To distinguish, we write $\bar g$ as the inverse of $g$ in $A$, while $g^{-1}$ as the inverse of $g$ in $G$ as usual.}, 
\[
\bar g=\alpha_g(g^{-1})\qforal g\in G. 
\]

If $(\alpha,\beta)$ is a compatible pair, then we claim
\[
\beta_h(g)=\alpha_{{\alpha_g(h)}^{-1}}\circ\Ad_{\ol{\alpha_g(h)}}(g),
\]
where $\Ad_{\ol{\alpha_g(h)}}$ acts on $A$. 

Indeed, since $(\alpha,\beta)$ is a compatible pair, one has 
\begin{align*}
&\  g\odot h=g\alpha_{g^{-1}}(h)=h\beta_{\alpha_{g^{-1}}(h)}(g)\\
\Rightarrow&\ g\odot \alpha_g(h)=\alpha_g(h)\beta_h(g)\\
\Rightarrow&\ \beta_h(g)=\alpha_g(h)^{-1}(g\odot \alpha_g(h))\\
&\hskip 1cm =\alpha_g(h)^{-1}\odot \alpha_{\alpha_g(h)^{-1}}\big(g\odot \alpha_g(h)\big)\\
&\hskip 1cm =\alpha_{\alpha_g(h)^{-1}}\left[\alpha_{\alpha_g(h)}(\alpha_g(h)^{-1})\odot g\odot \alpha_g(h)\right]\ (\text{as }\alpha_g\in\aut(A))\\
&\hskip 1cm =\alpha_{\alpha_g(h)^{-1}}\left[\ol{\alpha_g(h)}\odot g\odot \alpha_g(h)\right]\\
&\hskip 1cm =\alpha_{{\alpha_g(h)}^{-1}}\left[\ol{\alpha_g(h)}\odot g\odot \alpha_g(h)\right]\\
&\hskip 1cm =\alpha_{{\alpha_g(h)}^{-1}}\circ\Ad_{\ol{\alpha_g(h)}}(g).
\end{align*}
This proves our claim. 

In particular, if the YBE solution $R$ on $G$ determined by $(\alpha,\beta)$ is symmetric, then $A$ is abelian \cite{LYZ00}. So in this case
 $\Ad_g$ ($g\in A$) is nothing but the identity mapping on $A$. 
 \end{rem}

Making use of Theorem \ref{T:gYBE}, Remark \ref{R: extension} and the constructions of affine actions in Subsection \ref{SS:aff}, we get two constructions of YBE solutions on groups. 

\subsection*{Lifting revisited} Let ${R_X}$ be a YBE solution.  
In the lifting construction on affine actions, let $G=G_X$, $A=A_X$, and $\rho$ be the affine action associated to ${R_X}$. 
Then \eqref{E:mul} becomes 
\[
x\cdot y=x\sigma_{\theta(x)}(y)\qforal x,y\in H. 
\]
In this case, 
$\tilde\rho_x(y)=x\cdot y$ is a regular affine action, and so it yields a YBE solution on $(H,\cdot)$.  

%%%%%%%%%%%%%%%%%%%%%%%
\subsection*{Semi-direct product revisited} 

 Let ${R_X}$ and ${R_Y}$ be two YBE solutions. Let $G=G_X$, $A=A_X$, $\tilde G=G_Y$, $\tilde A=A_Y$
 in the semi-direct product construction on affine actions.
Suppose $\theta$ is an action of $G_X$ on $G_Y$ satisfying \eqref{E:asemi}. In this case, 
\[
\rho^X\ltimes_\theta\rho^Y(g,h)=(\rho^X_g, \rho_h^Y\circ\theta_g),
\]
and $\rho^X\ltimes_\theta\rho^Y$ is also regular. 
It follows that $\rho^X\ltimes_\theta\rho^Y$ determines a YBE solution, say $\bar R$, on $G_X\ltimes_\theta G_Y$. 
Notice that if $\theta$ is the trivial action, then $\bar R$ is nothing but the trivial extension of ${R_X}$ and ${R_Y}$ in the sense of \cite{ESS}
(also cf. \cite[2.2 2$^\circ$]{Yan16}).

%%%%%%%%%%%%%%%%%%%%%%%%%%%%%%
%%%%%%%%%%%%%%%%%%%%%%%%%%%%%%

\section{Classifying solutions of the Yang-Baxter equation via their associated affine actions}

\label{S:classification}

In this section, we state and prove our main results in this paper. We classify all injective YBE solutions 
in terms of their associated regular affine actions (Theorem \ref{T:giso}). Furthermore, a connection with C*-dynamical systems is 
obtained: All injective YBE solutions can also be classified via their associated C*-dynamical systems (Theorem \ref{T:ds}). 

Let $R_X$ be a YBE solution. Denote by $\iota_G$ and $\iota_A$ the natural mappings from $X$ into $G_X$ and $A_X$, respectively. 

\begin{defn}
\label{D:inj}
If $\iota_G$ is injective, then ${R_X}$ is said to be injective. 
\end{defn}

It is known from \cite{Sol00} that $\iota_G$ is injective if and only if so is $\iota_A$. Also,
every symmetric YBE solution is injective. 

Let ${R_X}$ and ${R_Y}$ be two YBE solutions. 
Recall that a mapping $h: X\to Y$ is a \textit{YB-homomorphism between ${R_X}$ and ${R_Y}$}, if 
${R_Y}(h\times h)=(h\times h){R_X}$. This amounts to saying that 
\begin{align}
\label{E:alphainv}
\alpha^Y_{h(x)}(h(y))=h(\alpha^X_x(y))\quad \text{and}\quad \beta^Y_{h(x)}(h(y))=h(\beta^X_x(y))
\end{align}
for all $x,y\in X$. 
In this case, we also say that $R_X$ is \textit{homomorphic to $R_Y$ via $h$}. 
Of course, if $h$ is bijective, then $R_X$ and $R_Y$ are called \textit{isomorphic}. 

If ${R_X}$ and ${R_Y}$ are symmetric, then only one of the two identities in \eqref{E:alphainv} suffices.

\begin{prop}
\label{P:symcon}
Let ${R_X}$ and ${R_Y}$ be two arbitrary YBE solutions. If $R_X$ is homomorphic to $R_Y$ via $h$, 
then $h$ induces group homomorphisms $h_G: G_X\to G_Y$ and $h_A: A_X\to A_Y$, 
such that $h_A$ is $G_X$-equivariant relative to $(\rho^X,\rho^Y \circ h_G)$.

If $h$ is furthermore bijective, then $\rho^X$ and $\rho^Y \circ h_G$ are conjugate. 
\end{prop}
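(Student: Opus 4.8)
The plan is to build $h_G$ and $h_A$ directly from the defining relations of the structure and derived structure groups, and then verify the equivariance by reducing everything to the generators $X$, where the hypothesis \eqref{E:alphainv} is available. First I would define $h_G : G_X \to G_Y$ by sending a generator $x \in X$ to $h(x) \in Y \subseteq G_Y$. To see this extends to a group homomorphism, I must check that $h_G$ respects the defining relation $xy = \alpha^X_x(y)\,\beta^X_y(x)$ of $G_X$; applying $h_G$ to both sides and using both identities in \eqref{E:alphainv} shows the image relation $h(x)h(y) = \alpha^Y_{h(x)}(h(y))\,\beta^Y_{h(y)}(h(x))$, which is precisely a defining relation in $G_Y$. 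The same argument, using the derived-group relation $x \bullet y = y \bullet \phi_R(x,y)$ (equivalently $x\bullet y = x\,\alpha_{x^{-1}}(y)$ from the Remark after the definition of $A_{R_X}$), produces $h_A : A_X \to A_Y$, again sending $x \mapsto h(x)$; the point is that $\phi_R$ is built entirely from $\alpha$ and $\beta$, so \eqref{E:alphainv} transports the relation correctly.

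\textbf{Verifying the equivariance.} By Remark \ref{R:equ}(ii), since the translational part $b^X$ of $\rho^X$ is surjective onto $A_X$ (indeed bijective, by Proposition \ref{P:YBEaff}), it suffices to check only the cocycle identity
\[
b^Y\bigl(h_G(g)\bigr) = h_A\bigl(b^X(g)\bigr) \qforal g \in G_X,
\]
and the linear-part intertwining $h_A \circ \pi^X_g = \pi^Y_{h_G(g)} \circ h_A$ will follow automatically. Here $\pi^X = \alpha$ and $\pi^Y = \alpha$ are the induced actions from Proposition \ref{P:inv}, and $b^X, b^Y$ are the cocycles from Proposition \ref{P:YBEaff}. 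Because both sides of the displayed cocycle identity are, by the cocycle relation $b(gh) = b(g)\bullet\alpha_g(b(h))$, multiplicative-type functions determined by their values on the generating set $X$, I would verify it just for $g = x \in X$: there one has $b^X(x) = x$ and $b^Y(h_G(x)) = b^Y(h(x)) = h(x)$, using the Remark that $b$ fixes generators, while $h_A(b^X(x)) = h_A(x) = h(x)$. Both sides agree on generators, and a short induction on word length (propagating through the cocycle identity and using that $h_A$ already intertwines $\alpha$ and $\alpha$ on generators via \eqref{E:alphainv}) extends the agreement to all of $G_X$.

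\textbf{The bijective case.} If $h$ is a bijection, then its inverse $h^{-1} : Y \to X$ is also a YB-homomorphism, so by the same construction it induces $(h^{-1})_G$ and $(h^{-1})_A$. These are two-sided inverses of $h_G$ and $h_A$ respectively, since they agree with the identity on generators; hence $h_A$ is a group isomorphism. Combined with the equivariance already established, Definition \ref{D:conj} then says exactly that $\rho^X$ and $\rho^Y \circ h_G$ are conjugate, with conjugating isomorphism $h_A$.

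\textbf{Anticipated obstacle.} The one genuinely delicate point is the well-definedness of $h_A$ on the derived structure group: the relation defining $A_X$ involves $\phi_R(x,y) = \alpha_y(\beta_{\alpha_x^{-1}(y)}(x))$, which mixes $\alpha$, $\alpha^{-1}$, and $\beta$. I must confirm that \eqref{E:alphainv}, which is stated for $\alpha$ and $\beta$ themselves, also controls $\alpha^{-1}$ — i.e.\ that a YB-homomorphism intertwines the \emph{inverse} actions. This follows because $h_G$ is a group homomorphism and $\alpha$ extends to an action of $G_X$ (Corollary \ref{C:char}), so $h$ intertwines $\alpha^X_{x^{-1}}$ with $\alpha^Y_{h_G(x)^{-1}}$; establishing this intertwining of inverses cleanly, and then chaining it through the composite $\phi_R$, is where the bookkeeping is heaviest, and is the step I would write out in full.
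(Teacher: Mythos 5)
Your proposal is correct and follows essentially the same route as the paper: extend $h$ to $h_G$ and $h_A$ by checking that the defining relations of $G_Y$ and $A_Y$ are respected via \eqref{E:alphainv}, then verify the intertwining of the linear parts and the cocycle identity on generators and propagate. The only cosmetic differences are that the paper sidesteps the $\alpha^{-1}$ issue you flag by writing the relation of $A_Y$ in the reparametrized form $h(x)\bullet\alpha^Y_{h(x)}(h(y))=\alpha^Y_{h(x)}(h(y))\bullet\alpha^Y_{\alpha^Y_{h(x)}(h(y))}(\beta^Y_{h(y)}(h(x)))$, in which no inverses appear, and that it establishes \eqref{E:halpha} before \eqref{E:hb} rather than invoking Remark \ref{R:equ}(ii) --- note that your word-length induction for the cocycle identity actually needs the intertwining $h_A\circ\alpha^X_x=\alpha^Y_{h(x)}\circ h_A$ on all of $A_X$, not merely on generators of $A_X$, though that extension is immediate since all maps involved are group homomorphisms.
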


\begin{proof}
For convenience, let ${R_X}(x_1,x_2)=(\alpha^X_{x_1}(x_2),\beta^X_{x_2}(x_1))$ for all $x_1,x_2\in X$, and 
${R_Y}(y_1,y_2)=(\alpha^Y_{y_1}(y_2),\beta^Y_{y_2}(y_1))$ for all $y_1,y_2\in Y$. 

Notice that since $h:X\to Y$ is a YB-homomorphism between $R_X$ and $R_Y$, it is easy to check that $h$ can be extended to a
group homomorphism, say $h_G$, from $G_X$ to $G_Y$. 
Indeed, it follows from \eqref{E:alphainv} and the definition of $G_Y$ that 
\[
h(\alpha^X_x(y))h(\beta^X_y(x))=\alpha^Y_{h(x)}(h(y))\beta^Y_{h(y)}(h(x))=h(x)h(y) 
\]
for all $x,y\in X$. Obviously, $\rho^Y\circ h_G$ is an affine action of $G_X$ on $A_Y$. 

Similarly, one can extend $h$ to a group homomorphism, say $h_A$, from $A_X$ to $A_Y$. In fact,
repeatedly using \eqref{E:alphainv} yields
\begin{align*}
&\ h(\alpha^X_x(y))\bullet h(\alpha^X_{\alpha^X_x(y)}(\beta^X_y(x)))\\
=&\ \alpha^Y_{h(x)}(h(y))\bullet \alpha^Y_{h(\alpha_x(y))}(h(\beta^X_y(x)))\\
=&\ \alpha^Y_{h(x)}(h(y))\bullet \alpha^Y_{\alpha^Y_{h(x)}(h(y))}(\beta^Y_{h(y)}(h(x)))
\end{align*}
for all $x,y\in X$. 
But the definition of $A_Y$ gives 
\[
h(x)\bullet \alpha^Y_{h(x)}(h(y))=\alpha^Y_{h(x)}(h(y))\bullet \alpha^Y_{\alpha^Y_{h(x)}(h(y))}(\beta^Y_{h(y)}(h(x))),
\]
and so 
\begin{align*}
h(x)\bullet h(\alpha^X_x(y))
&=h(x)\bullet \alpha^Y_{h(x)}(h(y))\\
&=h(\alpha^X_x(y))\bullet h(\alpha^X_{\alpha^X_x(y)}(\beta^X_y(x)))\qforal x,y\in X.
\end{align*}

In what follows, we show that $h_A$ is $G_X$-equivariant relative to $\rho^X$ and $\rho^Y\circ h_G$. By Remark 
\ref{R:equ} it is equivalent to show 
 \begin{align}
 \label{E:halpha}
  \alpha^Y_{h_G(g)}(h_A(a))&=h_A(\alpha^X_g(a))\qforal g\in G_X, a\in A_X,\\
 \label{E:hb}
 h_A(b^X(g))&=b^Y(h_G(g))\qforal g\in G_X. 
 \end{align}
Applying \eqref{E:alphainv} and Proposition \ref{P:inv}, one has that
\[
\alpha^Y_{h_G(g)}(h_A(x))=h_A(\alpha^X_g(x))\qforal g\in G_X, x\in X.
\]
Now from this identity and Proposition \ref{P:inv}, one can easily verify \eqref{E:halpha}.  
  
For \eqref{E:hb}, first notice that it is true when $g\in X$, as both sides are equal to $h(g)$. 
Then the general case follows from \eqref{E:halpha} and the definition of 1-cocycles.
 
 The last assertion of the proposition is clear. 
\end{proof}

The following theorem generalizes the case of symmetric YBE solutions (cf., e.g., \cite{ESS}). 

\begin{thm}
\label{T:giso}
Let ${R_X}$ and ${R_Y}$ be two injective YBE solutions. Then they are isomorphic, if and only if there is a group isomorphism $\phi:G_X\to G_Y$
such that $\phi(X)=Y$, and  $\rho^X$ and $\rho^Y\circ\phi$ are conjugate. 
\end{thm}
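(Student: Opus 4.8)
The plan is to treat the two implications separately, with the reverse direction carrying essentially all of the work.

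For the forward implication I would start from an isomorphism of solutions, i.e.\ a bijection $h:X\to Y$ with $R_Y(h\times h)=(h\times h)R_X$, and invoke Proposition~\ref{P:symcon}: it produces group homomorphisms $h_G:G_X\to G_Y$ and $h_A:A_X\to A_Y$ and, because $h$ is bijective, the conjugacy of $\rho^X$ and $\rho^Y\circ h_G$. It then remains to observe that $\phi:=h_G$ is a genuine group isomorphism carrying $X$ onto $Y$. The condition $\phi(X)=Y$ is immediate since $h_G$ restricts to $h$ on the generators; to see that $h_G$ is invertible I would note that $h^{-1}:Y\to X$ is again a YB-homomorphism and hence induces $(h^{-1})_G:G_Y\to G_X$, which agrees with the identity on generators when composed either way with $h_G$, forcing $(h^{-1})_G=h_G^{-1}$.

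For the reverse implication I would set $h:=\phi|_X$. Using injectivity of $R_X$ and $R_Y$ I identify $X$ and $Y$ with their images under $\iota_G$ (equivalently $\iota_A$), so that $\phi(X)=Y$ together with the bijectivity of $\phi$ makes $h:X\to Y$ a bijection. Writing $\rho^X=(\alpha^X,b^X)$ and $\rho^Y=(\alpha^Y,b^Y)$ and unwinding the conjugacy through Remark~\ref{R:equ}(i), I obtain an isomorphism $\varphi:A_X\to A_Y$ satisfying $\varphi\circ\alpha^X_g=\alpha^Y_{\phi(g)}\circ\varphi$ and $b^Y_{\phi(g)}=\varphi(b^X_g)$ for all $g\in G_X$. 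Evaluating the cocycle identity at a generator $x\in X$ and using $b^X(x)=x$, $b^Y(y)=y$ (the Remark following Proposition~\ref{P:YBEaff}), I conclude $\varphi(x)=h(x)$, i.e.\ $\varphi$ restricts to $h$ on $X$.

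The two YB-homomorphism conditions in \eqref{E:alphainv} then fall out. The $\alpha$-condition comes directly from the equivariance identity evaluated at $g=x\in X$ on the element $y\in X\subseteq A_X$: since $\alpha^X_x(y)\in X$ and $\varphi|_X=h$, it reads $h(\alpha^X_x(y))=\alpha^Y_{h(x)}(h(y))$. For the $\beta$-condition, which is the step I expect to be the main obstacle because the affine action only records $\alpha$ and $b$ and never sees $\beta$ directly, I would instead exploit that $\phi$ respects group multiplication. Applying $\phi$ to the defining relation $xy=\alpha^X_x(y)\beta^X_y(x)$ of $G_X$ and using $\phi|_X=h$ gives $h(x)h(y)=h(\alpha^X_x(y))h(\beta^X_y(x))$, while the defining relation of $G_Y$ gives $h(x)h(y)=\alpha^Y_{h(x)}(h(y))\beta^Y_{h(y)}(h(x))$. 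Since the first factors already coincide by the $\alpha$-condition, cancellation in $G_Y$ yields $h(\beta^X_y(x))=\beta^Y_{h(y)}(h(x))$, and swapping $x$ and $y$ produces exactly the second identity in \eqref{E:alphainv}. Hence $h$ is a bijective YB-homomorphism, so $R_X$ and $R_Y$ are isomorphic, completing the reverse direction.
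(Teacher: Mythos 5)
Your proposal is correct, and the forward direction matches the paper's (both just invoke Proposition~\ref{P:symcon} and take $\phi:=h_G$). The reverse direction, however, takes a genuinely different and more economical route. The paper passes through the universal extensions $\bar R_X$ and $\bar R_Y$ on the structure groups: it writes $\phi=(b^Y)^{-1}\circ h\circ b^X$, introduces the auxiliary multiplication $\odot$ together with the maps $\tilde\alpha,\tilde\beta$, proves that $\phi$ is a YB-isomorphism between $\bar R_X$ and $\bar R_Y$, and only then restricts to $X$ using injectivity. You instead work entirely at the level of generators: the equivariance identity from Remark~\ref{R:equ}, evaluated at $g=x\in X$ and $y\in X\subseteq A_X$ (together with $\varphi|_X=h$, which you correctly extract from $b^Y_{\phi(g)}=\varphi(b^X_g)$ and $b(x)=x$), gives the $\alpha$-half of \eqref{E:alphainv}; and the $\beta$-half, which the affine action never sees directly, you recover by pushing the defining relation $xy=\alpha^X_x(y)\beta^X_y(x)$ through the group homomorphism $\phi$ and cancelling the already-matched first factors in the group $G_Y$. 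This cancellation step is legitimate and is where injectivity of $\iota_G$ for $R_Y$ (and of $\iota_A$, for the $\alpha$-step) is genuinely used, so the hypothesis is not wasted. What your shortcut gives up is the by-product the paper extracts from its longer computation: the verification that $\phi$ is a YB-isomorphism between the extensions $\bar R_X$ and $\bar R_Y$ is exactly what the paper reuses to prove Theorem~\ref{T:iso} for arbitrary (not necessarily injective) solutions, so with your argument alone that theorem would still need the paper's computation. One cosmetic point: the identity you evaluate to get $\varphi(x)=h(x)$ is the translational-part compatibility from Remark~\ref{R:equ}, not the cocycle identity itself, and the fact that $\alpha^X_x$ preserves $X$ inside $A_X$ (needed to read the $\alpha$-condition back in $Y$) is Proposition~\ref{P:inv}; both are worth citing explicitly.
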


\begin{proof}
\underline{``Only if" part:} Let $h:X\to Y$ be a YB-isomorphism between ${R_X}$ and ${R_Y}$. 
Keep the same notation used in the proof of Proposition \ref{P:symcon}.
Then $\phi:=h_G$ has all desired properties, and furthermore $\rho^X$ and $\rho^Y\circ \phi$ are conjugate via $h_A$. 

\underline{``If" part:} 
As before, write $\rho^X=(\alpha^X, b^X)$ and $\rho^Y=(\alpha^Y, b^Y)$.
Let $h: A_X\to A_Y$ be a $G_X$-equivariant mapping relative to $(\rho^X, \rho^Y \circ\phi)$.  Then by Remark \ref{R:equ}  we have 
\begin{align}
\label{E:alpha}
h \circ\alpha^X_g&=\alpha^Y_{\phi(g)}\circ h,\\
\label{E:b}
b^Y \circ \phi(g) &=h\circ b^X(g)
\end{align}
for all $g\in G_X$. 

On the other hand, it follows from the proof of Theorem \ref{T:gYBE}  and Remark \ref{R: extension} that $\rho^X$ and $\rho^Y$ induce YBE solutions $\bar{R}_X$ and 
$\bar{R}_Y$ on $G_X$ and $G_Y$, respectively. Actually, 
\begin{align*}
\bar{R}_X(g_1,g_2)&=(\tilde\alpha^X_{g_1}(g_2), \tilde\beta^X_{g_2}(g_1))\qforal g_1,g_2\in G_X, \\
\bar{R}_Y(g_1',g_2')&=(\tilde\alpha^Y_{g_1'}(g_2'), \tilde\beta^Y_{g_2'}(g_1'))\qforal g_1',g_2'\in G_Y,
\end{align*}
where 
\[
\tilde\alpha^X_{g_1}:=(b^X)^{-1}\alpha^X_{g_1}b^X, \quad\tilde\beta^X_{g_2}(g_1):=\tilde\alpha^X_{g_1}(g_2)^{-1}g_1g_2, 
\]
and similarly for $\tilde\alpha^Y$, $\tilde\beta^Y$.  

From \eqref{E:b} one has 
\begin{align}
\label{E:tildeh}
\phi=(b^Y)^{-1}\circ h\circ b^X. 
\end{align}
We claim that $\phi$ is actually a YB-isomorphism between $\bar{R}_X$ and $\bar{R}_Y$. 
To this end, we must show that the two identities in \eqref{E:alphainv} hold true. 

$\blacktriangleright$ Firstly, we check 
\[
\phi\circ\tilde\alpha^X_g=\tilde\alpha^Y_{\phi(g)}\circ\phi\qforal g\in G_X.
\]
But this follows from \eqref{E:alpha}, the definitions of $\tilde\alpha^X$ and $\tilde\alpha^Y$: 
\begin{align*}
\eqref{E:alpha}\Rightarrow &\ hb^X((b^X)^{-1} \alpha^X_gb^X)=b^Y((b^Y)^{-1}\alpha^Y_{\phi(g)}b^Y)((b^Y)^{-1} hb^X)\\
\Rightarrow&\ [(b^Y)^{-1}hb^X][(b^X)^{-1} \alpha^X_gb^X]=[(b^Y)^{-1}\alpha^Y_{\phi(g)}b^Y][(b^Y)^{-1} hb^X]\\
\Rightarrow&\ \phi \circ \tilde \alpha^X_g=\tilde\alpha^Y_{\phi(g)}\circ\phi.
\end{align*}

$\blacktriangleright$ Secondly, we verify that 
\[
\phi\circ\tilde\beta^X_g=\tilde\beta^Y_{\phi(g)}\circ\phi\qforal g\in G_X.
\]

Since $b^X$ is a 1-cocycle with respect to $\alpha^X$ in coefficient $A_X$, one has that for all $g_1,g_2\in G_X$
\begin{align}
\nonumber
&\ b^X_{g_1g_2}=b^X_{g_1}\bullet\alpha^X_{g_1}(b^X_{g_2})\\
\nonumber
\Rightarrow &\ g_1(b^X)^{-1}(\alpha^X_{g_1^{-1}}(b^X_{g_2}))=(b^X)^{-1}(b^X_{g_1}\bullet b^X_{g_2})=g_1\odot g_2\\
\nonumber
\Rightarrow &\ g_1\tilde\alpha^X_{g_1^{-1}}(g_2)=g_1\odot g_2\\
\nonumber
\Rightarrow &\ g_1^{-1}\tilde\alpha^X_{g_1}(g_2)=g_1^{-1}\odot g_2\\
\label{E:alpha-1}
\Rightarrow &\ \tilde\alpha^X_{g_1}(g_2)^{-1}=(g_1^{-1}\odot g_2)^{-1}g_1^{-1}.
\end{align}
Similarly, 
\[
\tilde\alpha^Y_{g_1'}(g_2')^{-1}=(g_1'^{-1}\odot g_2')^{-1}g_1'^{-1}\qforal g_1',g_2'\in G_Y. 
\tag{12'}
\]

Now define a new multiplication $\odot$ on $G_X$ by
\[
g_1\odot g_2=(b^{X})^{-1}(b^X_{g_1}\bullet b^X_{g_2}) \qforal g_1,g_2\in G_X,
\]
and similarly on $G_Y$. Then it is easy to check that $(G_X,\odot)$ and $(G_Y,\odot)$ are groups.  
In what follows, we claim that $\phi$ is also a group homomorphism from $(G_X, \odot)$ to $(G_Y, \odot)$. 
As a matter of fact, for all $g_1$, $g_2\in G_X$ one has 
\begin{align*}
\phi(g_1\odot g_2)
&=\phi\circ (b^X)^{-1}(b^X_{g_1}\bullet b^X_{g_2})\\
&=(b^Y)^{-1}\circ h(b^X_{g_1}\bullet b^X_{g_2})\ (\text{by }\eqref{E:b})\\
&=(b^Y)^{-1}(h(b^X_{g_1})\bullet h(b^X_{g_2}))\ (\text{as }h:A_X\to A_Y\text{ is a homomorphism})\\
&=(b^Y)^{-1}(b^Y_{\phi(g_1)}\bullet b^Y_{\phi(g_2)})\ (\text{by }\eqref{E:b})\\
&=\phi(g_1)\odot  \phi(g_2) \ (\text{by the definition of }\odot\text{ in }G_Y).
\end{align*}

We now have
\begin{align*}
  \phi\circ \tilde\beta^X_{g_2}(g_1)
&=\phi(\tilde\alpha^X_{g_1}(g_2)^{-1}g_1g_2)\ (\text{by the definition of }\tilde\beta^X)\\
&=\phi((g_1^{-1}\odot g_2)^{-1}g_1^{-1}g_1g_2)\ (\text{by }\eqref{E:alpha-1})\\
&=\phi(g_1^{-1}\odot g_2)^{-1}\phi(g_2)\ (\text{as }\phi \text{ is a homomorphism from }G_X\text{ to }G_Y)\\
&=(\phi(g_1^{-1})\odot \phi(g_2))^{-1}\phi(g_2)\ (\text{by the above claim})\\
&=(\phi(g_1)^{-1}\odot \phi(g_2))^{-1}\phi(g_2)\ (\text{as }\phi \text{ is a homomorphism from }G_X\text{ to }G_Y)\\
&=(\phi(g_1)^{-1}\odot \phi(g_2))^{-1}\phi(g_1)^{-1}\phi(g_1)\phi(g_2)\\
&=\tilde\alpha^X_{\phi(g_1)}(\phi(g_2))^{-1}\phi(g_1)\phi(g_2)\ (\text{by (12')})\\
&=\tilde\beta^X_{\phi(g_2)}(\phi(g_1))\ (\text{by the definition of }\tilde\beta^Y)
\end{align*}
for all $g_1,g_2\in G_X$. 

Therefore, $\phi$ is a YB-isomorphism between $\bar {R}_X$ and $\bar {R}_Y$. 

Recall from Remark \ref{R: extension} that $\bar{R}_X$ is an extension of $R_X$ from $X$ to $G_X$ and similarly for $\bar R_Y$. 
Since $R_X$ and $R_Y$ are injective and $\phi(X)=Y$, the restriction $\phi|_X$ yields a YB-isomorphism between ${R_X}$ and ${R_Y}$. 
\end{proof}

%%%%%%%%%%%%%
We are now ready to provide a characterization when the extensions $\bar{R}_X$ and $\bar{R}_Y$ are isomorphic. 

\begin{thm}
\label{T:iso}
Let ${R_X}$ and ${R_Y}$ be two arbitrary YBE solutions.  Then the extensions $\bar R_X$ and $\bar R_Y$ on $G_X$ and $G_Y$ are YB-isomorphic,
if and only if there is a group isomorphism $\phi:G_X\to G_Y$ such that $\rho^X$ and $\rho^Y\circ\phi$ are conjugate. 
\end{thm}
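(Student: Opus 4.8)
The plan is to prove Theorem~\ref{T:iso} by closely adapting the machinery already developed in the proof of Theorem~\ref{T:giso}. The key observation is that the extensions $\bar R_X$ and $\bar R_Y$ are YBE solutions on the \emph{full} groups $G_X$ and $G_Y$, so the generator-preserving condition $\phi(X)=Y$ that was needed in Theorem~\ref{T:giso} becomes irrelevant here. This is precisely what makes the statement cleaner: we no longer have to track the distinguished generating set, only the group isomorphism and the conjugacy of affine actions.

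For the \textbf{``if'' part}, I would proceed exactly as in the ``If'' part of Theorem~\ref{T:giso}. Given a group isomorphism $\phi:G_X\to G_Y$ together with a bijective $G_X$-equivariant map $h:A_X\to A_Y$ relative to $(\rho^X,\rho^Y\circ\phi)$, Remark~\ref{R:equ} supplies the two identities \eqref{E:alpha} and \eqref{E:b}. From these one shows, verbatim as before, that $\phi$ intertwines the linear parts ($\phi\circ\tilde\alpha^X_g=\tilde\alpha^Y_{\phi(g)}\circ\phi$) and the $\beta$-parts ($\phi\circ\tilde\beta^X_g=\tilde\beta^Y_{\phi(g)}\circ\phi$), via the auxiliary multiplication $\odot$ and the cocycle computation \eqref{E:alpha-1}. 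The conclusion is that $\phi$ is a YB-isomorphism between $\bar R_X$ and $\bar R_Y$. The one simplification is that the final step of Theorem~\ref{T:giso}---restricting $\phi$ to $X$ and invoking injectivity---is simply dropped, since we want an isomorphism of the extensions themselves, not of the original solutions.

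For the \textbf{``only if'' part}, suppose $\phi:G_X\to G_Y$ is a YB-isomorphism between $\bar R_X$ and $\bar R_Y$. Since $\bar R_X$ and $\bar R_Y$ are themselves YBE solutions, Proposition~\ref{P:symcon} applies directly to them (with $X,Y$ replaced by $G_X,G_Y$): the YB-isomorphism $\phi$ induces a group isomorphism on structure groups and a conjugating map on derived structure groups, giving that the affine action associated to $\bar R_X$ is conjugate to that of $\bar R_Y$ composed with the induced map. The remaining task is to identify these affine actions with $\rho^X$ and $\rho^Y$, and to confirm that the group isomorphism induced on $G_{\bar R_X}$ and $G_{\bar R_Y}$ restricts or corresponds to the given $\phi:G_X\to G_Y$. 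This is where I expect the main obstacle to lie.

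The hard part will be pinning down the relationship between $G_X$ and the structure group $G_{\bar R_X}$ of the \emph{extended} solution $\bar R_X$, and likewise between $A_X$ and $A_{\bar R_X}$. By Remark~\ref{R: extension}(ii), $\bar R_X$ is the universal extension of $R_X$ in the sense of \cite{LYZ00}, and the affine action $\rho^X$ used to \emph{build} $\bar R_X$ should coincide with the affine action \emph{associated to} $\bar R_X$ via Proposition~\ref{P:YBEaff}; one needs the canonical identifications $G_X\cong G_{\bar R_X}$ and $A_X\cong A_{\bar R_X}$ under which $b=\id$, as recorded in the proof of Theorem~\ref{T:gYBE} (the map $\id$ is the bijective $1$-cocycle). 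Once these identifications are in place, the equivariant/conjugacy data produced by Proposition~\ref{P:symcon} translate directly into the conjugacy of $\rho^X$ and $\rho^Y\circ\phi$, and the theorem follows. I would therefore open the proof by making this identification explicit, then cite Proposition~\ref{P:symcon} for ``only if'' and reuse the computations of Theorem~\ref{T:giso} for ``if''.
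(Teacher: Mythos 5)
Your ``if'' direction is fine and is exactly what the paper does: it reuses the computation from the ``If'' part of Theorem~\ref{T:giso} verbatim and drops the final restriction-to-$X$ step, so no injectivity hypothesis is needed.

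The ``only if'' direction has a genuine gap, and it sits precisely where you predicted the obstacle would be. Your plan is to apply Proposition~\ref{P:symcon} to $\bar R_X$ and $\bar R_Y$ viewed as solutions on the sets $G_X$ and $G_Y$, and then invoke ``canonical identifications'' $G_{\bar R_X}\cong G_X$ and $A_{\bar R_X}\cong A_X$. No such identifications exist in general. The structure group $G_{\bar R_X}$ of the extended solution is generated by the \emph{set} $G_X$ subject only to the relations $gh=\tilde\alpha^X_g(h)\tilde\beta^X_h(g)$; it surjects canonically onto $G_X$ but is usually far larger. For instance, if $R_X$ is the flip on a finite set $X$, then $G_X\cong\bZ^{X}$ and $\bar R_X$ is again the flip on the underlying set of $\bZ^{X}$, so $G_{\bar R_X}$ is the free abelian group on the infinite set $\bZ^{X}$. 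Consequently the affine action that Proposition~\ref{P:YBEaff} associates to $\bar R_X$ is an action of $G_{\bar R_X}$ on $A_{\bar R_X}$ and cannot ``coincide'' with $\rho^X:G_X\curvearrowright A_X$; what Theorem~\ref{T:gYBE} actually records is that $\rho^X$ itself is a regular affine action of $G_X$ on $(G_X,\odot)\cong A_X$, which is a different object, an action of a different group on a different group. To salvage your route you would have to show that the homomorphisms $h_G:G_{\bar R_X}\to G_{\bar R_Y}$ and $h_A:A_{\bar R_X}\to A_{\bar R_Y}$ produced by Proposition~\ref{P:symcon} descend along the canonical surjections onto $G_X$, $G_Y$, $A_X$, $A_Y$ and that the descended maps are bijective; none of this is automatic, since a YB-isomorphism $h:G_X\to G_Y$ is only a set bijection compatible with $(\tilde\alpha,\tilde\beta)$, not a group homomorphism. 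The paper avoids the issue entirely: it restricts $h$ to the subset $\iota_G(X)$, observes that $\tilde\alpha,\tilde\beta$ restrict to $\alpha,\beta$ there, and extends $h|_{\iota_G(X)}$ (as in the proof of Proposition~\ref{P:symcon}) to the required isomorphism $\phi:G_X\to G_Y$ together with the conjugating map on $A_X$, never passing through $G_{\bar R_X}$ at all.
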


\begin{proof}
$(\Leftarrow)$: It directly follows from the proof of ``If" part of Theorem \ref{T:giso}. 

($\Rightarrow$): Let $h:G_X\to G_Y$ be a YB-isomorphism between $\bar R_X$ and $\bar R_Y$. Now consider $h|_{\iota_G(X)}$. Then completely similar to the proof of Proposition \ref{P:symcon}, $h|_{\iota_G(X)}$ can be extended to an isomorphism $\phi$ from $G_X$ to $G_Y$, such that $\rho^X$ and $\rho^Y\circ\phi$ are conjugate. 
\end{proof}

\smallskip
In the rest of this section, we provide a connection with C*-dynamical systems. For any group $G$, by $\ca(G)$ we mean the group C*-algebra of $G$. Since all groups here 
are assumed to be discrete, $\ca(G)$ is unital. Furthermore, $G$ can be canonically embedded to $\ca(G)$ as its unitary generators. 
For the background on C*-dynamical systems which is needed below, refer to \cite{Bla06}.

%%%%%%%%%%%%%
\begin{prop}
\label{P:pi}

(i) A YBE solution ${R_X}$ determines an action $\pi^X$ of $G_X$ on
$M_2(\ca(A_X))$ such that 
\[
\pi_g^X(\diag(x,y))=\diag(\gamma_g(x), \zeta_g(y)) \qforal g\in G_X, x,y\in\ca(A_X),
\] 
where $\gamma$ and $\zeta$ are representations of $G_X$ on $\ca(A_X)$. 

(ii) If $h$ is a YB-homomorphism between ${R_X}$ and ${R_Y}$, then there are group homomorphisms $h_G : G_X\to G_Y$ and $h_A:A_X\to A_Y$ such that 
the inflation $h_A^{(2)}$ is $G_X$-equivariant relative to $(\pi^X,\pi^Y \circ h_G)$. 
\end{prop}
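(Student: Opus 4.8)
The plan is to build everything from the two group actions $\gamma$ and $\zeta$ coming from the solution $R_X$ together with the distinguished action $\alpha$ of $G_X$ on $A_X$ already established in Proposition \ref{P:inv}. For part (i), I first need to identify what $\gamma$ and $\zeta$ should be. The natural candidates are the two actions of $G_X$ on $A_X$ that a YBE solution carries: namely the action $\alpha$ from Proposition \ref{P:inv}, and a second action built from $\beta$ (or equivalently the action $\alpha$ on the dual solution $R^\circ$, recalling from the earlier discussion that $\psi_{R^\circ}(x,y)=\phi_R(y,x)$). Since $\alpha$ acts on $A_X$ preserving $X$, it extends by functoriality of the group-C*-algebra construction to a $*$-automorphism of $\ca(A_X)$; concretely $\alpha_g$ sends the unitary generator $u_a$ to $u_{\alpha_g(a)}$, and the universal property of $\ca(A_X)$ guarantees this extends to a $*$-homomorphism, with inverse furnished by $\alpha_{g^{-1}}$, hence an automorphism. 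This gives the representation $\gamma$, and the analogous construction on the dual gives $\zeta$. Defining $\pi^X_g$ on $M_2(\ca(A_X))$ by acting entrywise as $\diag(\gamma_g,\zeta_g)$ on diagonal matrices (more precisely as the diagonal $*$-automorphism determined by $\gamma_g$ in the $(1,1)$-corner and $\zeta_g$ in the $(2,2)$-corner) yields the required action, and $g\mapsto\pi^X_g$ is a homomorphism because each of $\gamma,\zeta$ is.

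For part (ii), I would lean directly on Proposition \ref{P:symcon}, which already produces from a YB-homomorphism $h$ the group homomorphisms $h_G:G_X\to G_Y$ and $h_A:A_X\to A_Y$ with $h_A$ being $G_X$-equivariant relative to $(\rho^X,\rho^Y\circ h_G)$, and in particular (by Remark \ref{R:equ}(i)) relative to the linear parts $(\alpha^X,\alpha^Y\circ h_G)$. The map $h_A:A_X\to A_Y$ induces a $*$-homomorphism $\ca(h_A):\ca(A_X)\to\ca(A_Y)$ by functoriality, and its inflation $h_A^{(2)}$ is the entrywise $*$-homomorphism $M_2(\ca(A_X))\to M_2(\ca(A_Y))$. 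The equivariance statement $h_A^{(2)}\circ\pi^X_g=\pi^Y_{h_G(g)}\circ h_A^{(2)}$ then reduces, on diagonal entries, to checking $\ca(h_A)\circ\gamma^X_g=\gamma^Y_{h_G(g)}\circ\ca(h_A)$ and the same for $\zeta$. Because all the maps involved are determined by their values on unitary generators $u_a$, $a\in A_X$, it suffices to verify the identity on these generators, where it becomes exactly the intertwining relation $h_A(\alpha^X_g(a))=\alpha^Y_{h_G(g)}(h_A(a))$ supplied by Proposition \ref{P:symcon} (equation \eqref{E:halpha}), and the corresponding relation for the $\beta$-side action. Since a $*$-homomorphism out of $\ca(A_X)$ is determined by its restriction to generators, agreement on generators forces the two $*$-homomorphisms to coincide.

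The main obstacle I anticipate is pinning down precisely how $\zeta$ is defined and verifying that the $\beta$-side action genuinely descends to an action of $G_X$ on $A_X$ preserving $X$, in the same way $\alpha$ does. The cleanest route is to invoke the dual solution $R^\circ$, apply Proposition \ref{P:inv} to $R^\circ$ to obtain an action $\alpha^{R^\circ}$ of $G_{R^\circ_X}$ on $A_{R^\circ_X}$, and then transport it through the anti-isomorphism $\Phi:G_{R_X}\to G_{R^\circ_X}$ and the corresponding identification of derived structure groups. One must be careful that $\Phi$ is an \emph{anti}-isomorphism, so the transported action is a right action; taking the inverse (or composing with the inversion anti-automorphism on $A_X$) converts it to the honest left action $\zeta$. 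Once this bookkeeping is settled, every remaining step is a routine functoriality-plus-generators argument, and I would keep those verifications brief, noting only that $\ca(-)$ is a functor from discrete groups to unital C*-algebras and that homomorphisms into $\ca(A_Y)$ are determined on generators.
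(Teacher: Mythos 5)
There is a genuine gap in part (i): you have guessed the wrong $\zeta$, and with your choice the map $\pi^X_g$ is not even well defined as an automorphism of $M_2(\ca(A_X))$. In the paper, $\gamma_g=\alpha^X_g$ but $\zeta_g(\cdot)=b^X_g\,\alpha^X_g(\cdot)\,(b^X_g)^*$, i.e.\ $\zeta$ is the affine action $\rho^X=(\alpha^X,b^X)$ itself, turned into a genuine automorphism by conjugating with the unitary generator $b^X_g\in A_X\subset\ca(A_X)$; the full definition is
\[
\pi^X_g\begin{pmatrix}a_1&a_2\\ a_3&a_4\end{pmatrix}
=\begin{pmatrix}\alpha^X_g(a_1)&\alpha^X_g(a_2)(b^X_g)^*\\ b^X_g\alpha^X_g(a_3)&b^X_g\alpha^X_g(a_4)(b^X_g)^*\end{pmatrix}
=\Ad_{\operatorname{diag}(1,\,b^X_g)}\circ(\alpha^X_g)^{(2)}\begin{pmatrix}a_1&a_2\\ a_3&a_4\end{pmatrix},
\]
with prescribed \emph{off-diagonal} entries. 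This matters twice over. First, a $*$-automorphism of $M_2(\ca(A_X))$ fixing the diagonal matrix units and restricting to $\gamma_g$ and $\zeta_g$ on the two corners exists only if $\zeta_g=\Ad_u\circ\gamma_g$ for some unitary $u$ (multiplicativity on the $(1,2)$-corner forces $\psi(ab)=\gamma_g(a)\psi(b)$ and $\psi(bc)=\psi(b)\zeta_g(c)$, whence $\psi(b)=\gamma_g(b)\psi(1)=\psi(1)\zeta_g(b)$). The paper's $\zeta_g$ differs from $\gamma_g$ by the inner automorphism $\Ad_{b^X_g}$, so the extension exists; your $\zeta$, built from the dual solution $R^\circ$, has no reason to differ from $\alpha$ by an inner automorphism of $\ca(A_X)$, so "acting entrywise by $\diag(\gamma_g,\zeta_g)$" does not define an action on $M_2(\ca(A_X))$ (at best it acts on the diagonal subalgebra $\ca(A_X)\oplus\ca(A_X)$, which is not what the statement asserts). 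Second, the whole point of putting $b^X_g$ into $\zeta$ and into the off-diagonal entries is that the C*-dynamical system must remember the translational part of the affine action: in the proof of Theorem \ref{T:ds} one evaluates an intertwiner on $\left(\begin{smallmatrix}0&I\\ 0&0\end{smallmatrix}\right)$ to recover the cocycle identity $h_{11}(b^X_g)=b^Y_{\phi(g)}$. A direct sum of two linear representations, as you propose, loses $b$ entirely and would make Theorem \ref{T:ds} unprovable.

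Your part (ii) is the right strategy and matches the paper (invoke Proposition \ref{P:symcon} for $h_G,h_A$, extend $h_A$ to $\ca(A_X)\to\ca(A_Y)$ by functoriality, inflate, and check on generators), but the verification needs \emph{both} intertwining relations \eqref{E:halpha} and \eqref{E:hb}: the identity $h_A(b^X_g)=b^Y_{h_G(g)}$ is exactly what is needed to match the conjugating unitaries $b^X_g$ and $b^Y_{h_G(g)}$ appearing in $\zeta$ and in the off-diagonal entries. Your appeal to "the corresponding relation for the $\beta$-side action" is not available and is not what is required once $\zeta$ is defined correctly.
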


\begin{proof}
(i) Let $\pi^X$ be defined as 
\begin{align*}
\pi^X_g\left(\begin{matrix}a_1&a_2\\a_3&a_4\end{matrix}\right)
&=\left(\begin{matrix}\alpha^X_g(a_1)&\alpha^X_g(a_2)(b^X_g)^*\\ b^X_g\alpha^X_g(a_3)&b^X_g\alpha^X_g(a_4)(b^X_g)^*\end{matrix}\right)\\
&=\left(\begin{matrix}1&0\\ 0& b^X_g\end{matrix}\right) (\alpha_g^X)^{(2)}\left(\begin{matrix}a_1&a_2\\ a_3&a_4\end{matrix}\right)
   \left(\begin{matrix}1&0\\ 0& (b^X_g)^*\end{matrix}\right) 
\end{align*}
 for all $g\in G_X$ and $a_1,a_2,a_3,a_4\in A_X$. Then one can use the properties of $\alpha$ and $b$ to easily check that $\pi^X$
 is an action of $G_X$ on the matrix C*-algebra $M_2(\ca(A_X))$. Also $\gamma_g(\cdot)=\alpha_g^X(\cdot)$ and $\zeta_g(\cdot)=b^X_g\alpha^X_g(\cdot)(b^X_g)^*$
 are two representations of $G_X$ on $\ca(A_X)$. 

(ii) Since $h$ is a YB-isomorphism between $R_X$ and $R_Y$, as in the proof of Proposition \ref{P:symcon},  it induces group 
homomorphisms $h_G: G_X\to G_Y$ and $h_A:A_X\to A_Y$ satisfying \eqref{E:halpha} and \eqref{E:hb}. Then we extend $h_A$ to a C*-homomorphism, 
still denoted by $h_A$, from $\ca(A_X)$ to $\ca(A_Y)$. 
Furthermore, its inflation $h_A^{(2)}: M_2(\ca(A_X))\to M_2(\ca(A_Y))$ gives a $G_X$-equivariant mapping relative to $\pi^X$ and $\pi^Y\circ h_G$.  
In fact, a simple calculation gives 
\[
h_A^{(2)} \circ \pi^X_g\left(\begin{matrix}a_1&a_2\\a_3&a_4\end{matrix}\right)
=\left(\begin{matrix}
h_A(\alpha^X_g(a_1))&h_A(\alpha^X_g(a_2))h_A((b^X_g)^*)\\ h_A(b^X_g)h_A(\alpha^X_g(a_3))&h_A(b_g)h_A(\alpha^X_g(a_4))h_A((b^X_g)^*)
\end{matrix}\right)
\]
and 
\[
\pi^Y_{h_G(g)}\circ h_A^{(2)}\left(\begin{matrix}a_1&a_2\\a_3&a_4\end{matrix}\right)
=\left(\begin{matrix}
\alpha_{h_G(g)}^Y(h_A(a_1))&\alpha_{h_G(g)}^Y(h_A(a_2))(b^Y_{h_G(g)})^*\\ b^Y_{h_G(g)}\alpha_{h_G(g)}^Y(h_A(a_3))&b^Y_{h_G(g)}\alpha_{h_G(g)}(a_4) (b^Y_{h_G(g)})^*
\end{matrix}\right).
\]
Then apply \eqref{E:halpha} and \eqref{E:hb} to obtain the right hand sides equal.
\end{proof}

As a consequence of Proposition \ref{P:pi}, from the associated regular affine action $\rho^X$ of a given YBE solution $R_X$, one obtains a C*-dynamical system $(G_X, M_2(\ca(A_X)),\pi^X)$.

\begin{thm}
\label{T:ds}
Two injective YBE solutions ${R_X}$ and ${R_Y}$ are isomorphic, if and only if there is a group isomorphism $\phi: G_X\to G_Y$ mapping $X$ onto $Y$ such that $(G_X, M_2(\ca(A_X)),\pi^X)$ and $(G_X, M_2(\ca(A_Y)),\pi^Y\circ \phi)$ are conjugate. 
\end{thm}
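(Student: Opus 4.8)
The plan is to reduce the statement to Theorem~\ref{T:giso} by proving that, for a \emph{fixed} group isomorphism $\phi\colon G_X\to G_Y$ with $\phi(X)=Y$, the affine actions $\rho^X$ and $\rho^Y\circ\phi$ are conjugate if and only if the C*-dynamical systems $(G_X,M_2(\ca(A_X)),\pi^X)$ and $(G_X,M_2(\ca(A_Y)),\pi^Y\circ\phi)$ are conjugate. Granting this per-$\phi$ equivalence, Theorem~\ref{T:ds} follows at once from Theorem~\ref{T:giso}: the existence of a $\phi$ making the affine actions conjugate is equivalent to the existence of a $\phi$ making the C*-systems conjugate, and Theorem~\ref{T:giso} already equates the former with $R_X\cong R_Y$. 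So everything comes down to the two implications of this equivalence, with $\phi$ held fixed.

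For the forward implication (affine conjugacy $\Rightarrow$ C*-conjugacy) I would essentially reuse the computation in Proposition~\ref{P:pi}(ii). Suppose $h\colon A_X\to A_Y$ is a group isomorphism conjugating $\rho^X$ to $\rho^Y\circ\phi$, so that $h\circ\alpha^X_g=\alpha^Y_{\phi(g)}\circ h$ and $h(b^X_g)=b^Y_{\phi(g)}$ for all $g\in G_X$ (Remark~\ref{R:equ}). Extend $h$ to a C*-isomorphism $\ca(A_X)\to\ca(A_Y)$ and let $\Psi:=h^{(2)}$ be its inflation to $M_2$. The two explicit matrix computations in the proof of Proposition~\ref{P:pi}(ii) then show verbatim that $\Psi\circ\pi^X_g=\pi^Y_{\phi(g)}\circ\Psi$; since $h$ is an isomorphism so is $\Psi$, and the two C*-dynamical systems are conjugate.

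The reverse implication is the heart of the matter. The guiding observation is that the affine data is encoded in the $2\times2$ matrix units: the diagonal units $e_{11},e_{22}$ are $\pi^X$-invariant projections, the $(1,1)$-corner with the restricted action is exactly $(\ca(A_X),\alpha^X)$, and the off-diagonal unit detects the cocycle through $\pi^X_g(e_{12})=(b^X_g)^*\,e_{12}$, equivalently $e_{21}\,\pi^X_g(e_{12})=e_{22}\,(b^X_g)^*$. The strategy is therefore: starting from a conjugating C*-isomorphism $\Psi$, modify it by an equivariant (fixed-point) unitary so that the modified map carries the standard matrix units of $M_2(\ca(A_X))$ to those of $M_2(\ca(A_Y))$. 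Once $\Psi(e^X_{ij})=e^Y_{ij}$, restricting $\Psi$ to the $(1,1)$-corner gives a C*-isomorphism $h\colon\ca(A_X)\to\ca(A_Y)$ with $h\circ\alpha^X_g=\alpha^Y_{\phi(g)}\circ h$, while applying $\Psi$ to $e_{21}\,\pi^X_g(e_{12})=e_{22}\,(b^X_g)^*$ and comparing with the same identity in $Y$ forces $h(b^X_g)=b^Y_{\phi(g)}$. Finally, because $b^X$ and $b^Y$ are bijections onto $A_X$ and $A_Y$ (Proposition~\ref{P:YBEaff}) and $\phi$ is an isomorphism, $h$ restricts to a bijective multiplicative map $A_X\to A_Y$, i.e.\ a group isomorphism conjugating $\rho^X$ to $\rho^Y\circ\phi$; Theorem~\ref{T:giso} then yields $R_X\cong R_Y$.

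I expect the main obstacle to be precisely the matrix-unit normalization in the reverse implication. The difficulty is that $e_{12}$ is only \emph{semi}-invariant (it scales by $(b^X_g)^*$ rather than being fixed), so the invariant projections $e_{11}$ and $e_{22}$ need not be equivalent \emph{within} the fixed-point algebra, and one cannot simply conjugate $\Psi(e^X_{11})$ onto $e^Y_{11}$ by a fixed-point unitary without justification. Resolving this requires comparing the two semi-invariant partial isometries $\Psi(e^X_{12})$ and $e^Y_{12}$ and exploiting the cocycle relations they satisfy, which are matched under $\phi$, to manufacture the needed equivariant unitary. This is where the genuine analytic content of the theorem sits, and it is the step I would develop most carefully.
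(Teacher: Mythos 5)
Your overall strategy --- fix $\phi$ and show that conjugacy of $\rho^X$ with $\rho^Y\circ\phi$ is equivalent to conjugacy of the two C*-dynamical systems, then quote Theorem~\ref{T:giso} --- is exactly the paper's, and your forward implication coincides with the paper's (inflate the conjugating group isomorphism to $h_A^{(2)}$ and reuse the computation of Proposition~\ref{P:pi}(ii)). The problem is the reverse implication: you have correctly located the crux (a conjugating C*-isomorphism of $M_2(\ca(A_X))$ onto $M_2(\ca(A_Y))$ need not respect the matrix units, and the off-diagonal unit is only semi-invariant, so one cannot obviously normalize by a fixed-point unitary), but you then stop and announce that this normalization is ``the step I would develop most carefully'' without developing it. As it stands this is a genuine gap: the entire content of the reverse direction is the passage from an arbitrary equivariant isomorphism $\Psi$ to one of the form $h^{(2)}$, you give no construction of the required equivariant unitary, and your own observation that the invariant projections $\Psi(e^X_{11})$ and $e^Y_{11}$ need not be equivalent inside the fixed-point algebra is precisely the obstruction you would have to overcome.

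For comparison, the paper does not attempt this normalization at all: it writes the intertwiner $\fh$ in entrywise component form $\fh=(h_{ij})$, evaluates the intertwining relation on $\diag(a,0)$ and on $e_{12}$ to read off $h_{11}\circ\alpha^X_g=\alpha^Y_{\phi(g)}\circ h_{11}$ and $h_{11}(b^X_g)=b^Y_{\phi(g)}$, i.e.\ \eqref{E:alpha} and \eqref{E:b}, and then invokes the ``if'' part of Theorem~\ref{T:giso}. In other words, the paper implicitly takes ``conjugate'' to mean an intertwiner that already decomposes entrywise, which is exactly the hypothesis you were trying to manufacture. If you adopt that reading of conjugacy, the remainder of your argument (restriction to the $(1,1)$-corner, extraction of the cocycle identity from the off-diagonal entry, surjectivity of $b^X$ and $b^Y$ to identify $h(A_X)=A_Y$, and the reduction to Theorem~\ref{T:giso}) is correct and agrees with the paper; if you insist on arbitrary equivariant C*-isomorphisms, you must either supply the missing normalization or find another route to \eqref{E:alpha} and \eqref{E:b}.
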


\begin{proof}
($\Rightarrow$): Keep the same notation as in the proof of Proposition \ref{P:pi}. If $h: X\to Y$ is a YB-isomorphism between ${R_X}$ and ${R_Y}$, then $\phi:=h_G$, and $\pi^X$ and $\pi^Y\circ \phi$ are equivalent via $h_A^{(2)}$. 

($\Leftarrow$): Let $\fh:M_2(\ca(A_X))\to M_2(\ca(A_Y))$ be an intertwining homomorphism between  $\pi^X$ and $\pi^Y\circ \phi$. Let us write 
$\fh=\left(\begin{matrix} h_{11}&h_{12}\\ h_{21}& h_{22}\end{matrix}\right)$. 
Then $\fh\circ\pi_g^X\left(\begin{matrix} a&0\\0&0\end{matrix}\right)= \pi^X_{\phi(g)}\circ\fh\left(\begin{matrix} a&0\\0&0\end{matrix}\right)$ yields 
\[
h_{11}\alpha^X_g(a)=\alpha^X_{\phi(g)}h_{11}(a)\qforal a\in A_X.
\]
Also $\fh\circ\pi_g^X\left(\begin{matrix} 0&I\\0&0\end{matrix}\right)= \pi^Y_{\phi(g)}\circ\fh\left(\begin{matrix} 0&I\\ 0&0\end{matrix}\right)$ yields 
$
(h_{11}(b^X_g))^*=(b^Y_{\phi(g)})^*,
$
which implies 
\[
h_{11}(b^X_g)=b^Y_{\phi(g)}.
\]
The above two identities give \eqref{E:alpha} and \eqref{E:b}. Then applying the proof of ``If" part of Theorem \ref{T:giso}  ends the proof. 
\end{proof}

%%%%%%%%%%%%%%%%%%%%%%%%%%%%%%%%%%%%%%
\appendix\section{A Commutation Relation for Semi-Direct Products}

\label{S:cycle}

%%%%%
In this appendix, we prove a commutation relation for semi-direct products of YBE solutions derived from cycle sets, which might be useful in the future studies. 
We further describe a connection between the structure group of the semi-direct product of two YBE solutions and the semi-direct product of their structure
groups.

\begin{defn}
\label{D:cycle}
A non-empty set $X$ with a binary relation $\cdot$ is called a cycle set, if 
\[
(x\cdot y)\cdot (x\cdot z)=(y\cdot x)\cdot (y\cdot z)\qforal x,y,z\in X. 
\]

A cycle set $X$ is said to be non-degenerate if $x\mapsto x\cdot x$ is bijective. 
\end{defn}

The main motivation to study cycle sets is the following theorem due to Rump (\cite{Rum05}): There is a one-to-one correspondence between the set of symmetricYBE solutions and the set of non-degenerate cycle sets. 
%%%%%%%%%%%%
%%%%%%%%%%%%%
In fact, let $X(,\cdot)$ be a non-degenerate cycle set. If we let $\ell_x(y)=x\cdot y$, then 
\[
R(x,y)=\left(\ell_{\ell^{-1}_y(x)}(y), \ell_y^{-1}(x)\right)
\] 
is a symmetric YBE solution on $X$. 
Conversely, given a symmetric YBE solution $R(x,y)=(\alpha_x(y),\beta_y(x))$ on $X$, let $x\cdot y=\beta_x^{-1}(y)$. 
Then $(X,\cdot)$ is a non-degenerate cycle set.
%%%%%%%%%%%%%%%

\smallskip

The following small lemma turns out very handy. 

\begin{lem}
\label{L:xyx}
Keep the above notation. Then 
\begin{align*}
G_{R_X}={}_{{\rm gp}}\left\langle X; (y\cdot x)y=(x\cdot y)x\qforal x,y\in X\right\rangle.
\end{align*}
\end{lem}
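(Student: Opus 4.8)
The plan is to show that the defining relations of $G_{R_X}$, which are the relations $xy=\alpha_x(y)\beta_y(x)$ coming from the symmetric solution $R$, coincide exactly with the relations $(y\cdot x)y=(x\cdot y)x$ once we translate everything into cycle-set language via $x\cdot y=\beta_x^{-1}(y)$. Since both group presentations share the same generating set $X$, it suffices to prove that the two families of relations are equivalent, i.e. each relation of one family is a consequence of (and in fact literally equal to) a relation of the other.

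First I would recall from the Rump correspondence recalled just above that, for the symmetric solution associated to $(X,\cdot)$, one has $\ell_x(y)=x\cdot y$ and the solution takes the form $R(x,y)=(\alpha_x(y),\beta_y(x))=(\ell_{\ell_y^{-1}(x)}(y),\ell_y^{-1}(x))$, so that $\beta_y(x)=\ell_y^{-1}(x)=y\cdot_{(-1)}x$ in the sense $\beta_x^{-1}(y)=x\cdot y$. The key computation is then to rewrite the generic structure-group relation $xy=\alpha_x(y)\beta_y(x)$ purely in terms of $\ell$. Writing $u:=\beta_y(x)=\ell_y^{-1}(x)$, so that $x=\ell_y(u)=y\cdot u$, the first coordinate becomes $\alpha_x(y)=\ell_{\ell_y^{-1}(x)}(y)=\ell_u(y)=u\cdot y$. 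Hence the relation $xy=\alpha_x(y)\beta_y(x)$ reads $(y\cdot u)\,y=(u\cdot y)\,u$ for the pair $(u,y)$, which upon renaming $u\rightsquigarrow x$ is precisely $(y\cdot x)\,y=(x\cdot y)\,x$.

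Conversely, because $x\mapsto\beta_x$ (equivalently $\ell$) is a bijection for each fixed index in a non-degenerate cycle set, as $x$ ranges over $X$ so does $u=\beta_y(x)$, so every relation $(y\cdot x)y=(x\cdot y)x$ arises from a defining relation of $G_{R_X}$ and vice versa. Thus the two presentations have identical generators and an identical set of relations, giving the claimed isomorphism (in fact equality) of presented groups.

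The only genuinely delicate point, rather than an obstacle, is bookkeeping the substitution $u=\ell_y^{-1}(x)$ correctly and confirming that $\alpha_x(y)$ really collapses to $\ell_u(y)=u\cdot y$; this uses the explicit form $\alpha_x=\ell_{\ell_y^{-1}(x)}$ valid only because $R$ is symmetric (so the Rump form applies). I would double-check this reduction by also verifying it through Lemma \ref{L:basic}'s involutivity identities if needed, but I expect the direct substitution above to suffice, with non-degeneracy of the cycle set ensuring the correspondence between the two index sets is a genuine bijection so that no relation is lost or gained.
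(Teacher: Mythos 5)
Your proposal is correct and follows essentially the same route as the paper: both rewrite the defining relation $xy=\alpha_x(y)\beta_y(x)$ using the identity $\alpha_x(y)=\beta^{-1}_{\beta_y(x)}(y)$ (equivalently the Rump form $\alpha_x(y)=\ell_{\ell_y^{-1}(x)}(y)$) and reindex via the bijection $x\mapsto \beta_y(x)$ to obtain $(y\cdot x)y=(x\cdot y)x$. Your explicit remark that non-degeneracy makes the reindexing a bijection of the relation sets is a point the paper leaves implicit, but the argument is the same.
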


\begin{proof}
By Lemma \ref{L:basic}, $\alpha_x(y)=\beta^{-1}_{\beta_y(x)}(y)$ for all $x,y \in X$. Hence 
\begin{align*}
R(x,y)=(\beta^{-1}_{\beta_y(x)}(y),\beta_y(x))
\Leftrightarrow &\ R(\beta_y^{-1}(x),y)=(\beta^{-1}_x(y),x)\\
\Leftrightarrow &\ R(y\cdot x,y)=(x\cdot y,x)
\end{align*}
for all $x,y\in X$. 
\end{proof}

Let us now recall Rump's semi-direct product of cycle sets below. 

\begin{defn}
\label{D:rump}
Let $X$ and $S$ be two finite cycle sets, and $\pi$ be an action of $X$ on $S$. That is,
$\pi: X\times S\to S, \ (x,s)\mapsto \pi_x(s)$, satisfies 
\begin{enumerate}
\item
$\pi_x(s\cdot t)=\pi_x(s)\cdot \pi_x(t)$ for every $x\in X$ and for all $s,t\in S$;

\item $\pi_{y\cdot x}\pi_y(s)=\pi_{x\cdot y}\pi_x(s)$ for all $x,y\in X$ and $s\in S$;

\item $\pi_x\in \sym(S)$ for every $x\in X$. 
\end{enumerate}
\end{defn}

Set 
\begin{align*}
\gamma_{x,y}(s,t)=\pi_{x\cdot y}(s)\cdot \pi_{y\cdot x}(t).
\end{align*}
Now define  
\begin{align}
\label{E:sdp}
(x,s)\cdot (y,t):=(x\cdot y, \gamma_{x,y}(s,t)).
\end{align}
Then this gives a cycle structure on $X\times S$, which is denoted by $X\ltimes_{\pi} S$, called the \textit{semi-direct product
of $X$ and $S$ by $\pi$}. The symmetric YBE solution determined by $X\ltimes_\pi S$ is written as $R_{X\ltimes_\pi S}$. 

\begin{rem}
\label{R:(i)}
Notice that for Definition \ref{D:rump} (i) one has 
\begin{align*}
\pi_x(s\cdot t)=\pi_x(s)\cdot \pi_x(t)
&\Leftrightarrow  \pi_x(\beta_s^{-1}(t))=\beta_{\pi_x(s)}^{-1}(\pi_x(t))\\
&\Leftrightarrow  \beta_{\pi_x(s)}(\pi_x(t))=\pi_x(\beta_s(t))\\
&\Leftrightarrow  \pi_x(\alpha_s(t))=\alpha_{\pi_x(s)}(\pi_x(t))
\end{align*}
for all $s,t\in S$ and $x\in X$. 
In particular, this shows that, for every $x\in X$, $\pi_x$ is a YB-isomorphism between $R_S$ and itself.  
\end{rem}

In the sequel, let us write
\[
R_X(x,y)=(\alpha_x(y),\beta_y(x))\quad{and}\quad R_S(s,t)=(\tilde\alpha_s(t), \tilde\beta_t(s)). 
\]

\begin{cor}
\label{C:semi}
Let $X$ and $S$ be cycles sets, and $\pi$ be an action of $X$ on $S$. 
Then the YBE solution $R_{X\ltimes_\pi S}$ is explicitly given by 
the following formula 
\[
R_{X\ltimes_\pi S}\big((x,s),(y,t)\big)=\left(\big(\alpha_x(y), \tilde\alpha_s(\pi_x(t))\big),\big(\beta_y(x),\pi_{\alpha_x(y)}^{-1}(\tilde\beta_{\pi_x(t)}(s))\big)\right)
\]
for all $x,y\in X$ and $s,t\in S$. 
\end{cor}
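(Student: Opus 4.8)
The plan is to apply Rump's cycle-set-to-YBE recipe $R(u,v)=(\ell_{\ell_v^{-1}(u)}(v),\ell_v^{-1}(u))$ directly to the semi-direct product cycle set $X\ltimes_\pi S$, where now $\ell_{(x,s)}(y,t)=(x,s)\cdot(y,t)=(x\cdot y,\gamma_{x,y}(s,t))$, and to match the two coordinates with the claimed formula one at a time. The first thing I would record is the dictionary supplied by Rump's correspondence: since $x\cdot y=\beta_x^{-1}(y)$ we have $\ell_x=\beta_x^{-1}$ and hence $\ell_x^{-1}=\beta_x$, together with $\ell_{\beta_y(x)}(y)=\alpha_x(y)$ and the symmetric identity $\alpha_x(y)=\beta_{\beta_y(x)}^{-1}(y)$ from Lemma \ref{L:basic}; the analogous statements hold on $S$ with $\tilde\alpha,\tilde\beta$, in particular $\tilde\alpha_s(t)=\tilde\beta_{\tilde\beta_t(s)}^{-1}(t)$.

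With $u=(x,s)$ and $v=(y,t)$, I would first compute the second (the ``$\beta$'') coordinate $\ell_{(y,t)}^{-1}(x,s)$, i.e. solve $(y,t)\cdot(u_1,u_2)=(x,s)$ for $(u_1,u_2)$. The $X$-coordinate gives $y\cdot u_1=x$, so $u_1=\ell_y^{-1}(x)=\beta_y(x)$. Substituting $y\cdot u_1=x$ into $\gamma_{y,u_1}(t,u_2)=\pi_{y\cdot u_1}(t)\cdot\pi_{u_1\cdot y}(u_2)$ reduces the $S$-coordinate equation to $\pi_x(t)\cdot\pi_{u_1\cdot y}(u_2)=s$; solving this inside the cycle set $S$ via $a\cdot b=\tilde\beta_a^{-1}(b)$ gives $\pi_{u_1\cdot y}(u_2)=\tilde\beta_{\pi_x(t)}(s)$. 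Since $u_1\cdot y=\beta_y(x)\cdot y=\ell_{\beta_y(x)}(y)=\alpha_x(y)$, applying $\pi_{\alpha_x(y)}^{-1}$ yields $u_2=\pi_{\alpha_x(y)}^{-1}(\tilde\beta_{\pi_x(t)}(s))$, which is exactly the claimed second component.

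For the first (the ``$\alpha$'') coordinate I would then compute $\ell_{(u_1,u_2)}(y,t)=(u_1\cdot y,\gamma_{u_1,y}(u_2,t))$. Here $u_1\cdot y=\alpha_x(y)$ as above, while $\gamma_{u_1,y}(u_2,t)=\pi_{u_1\cdot y}(u_2)\cdot\pi_{y\cdot u_1}(t)$ with $y\cdot u_1=\ell_y(\beta_y(x))=\beta_y^{-1}(\beta_y(x))=x$ and $\pi_{u_1\cdot y}(u_2)=\pi_{\alpha_x(y)}(u_2)=\tilde\beta_{\pi_x(t)}(s)$ by the construction of $u_2$. Thus the $S$-coordinate is $\tilde\beta_{\pi_x(t)}(s)\cdot\pi_x(t)$, and invoking the symmetric identity $\tilde\alpha_s(b)=\tilde\beta_{\tilde\beta_b(s)}^{-1}(b)$ with $b=\pi_x(t)$ collapses it to $\tilde\alpha_s(\pi_x(t))$, completing the match with $(\alpha_x(y),\tilde\alpha_s(\pi_x(t)))$.

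The only real obstacle is the bookkeeping of inverses: one must keep the two subscripts $u_1\cdot y$ and $y\cdot u_1$ distinct and correctly identify them as $\alpha_x(y)$ and $x$ respectively, and one must apply the $S$-level symmetric relation $\tilde\alpha_s(b)=\tilde\beta_{\tilde\beta_b(s)}^{-1}(b)$ in precisely the right spot. Everything else is a routine unwinding of Rump's formula through the cycle-to-YBE dictionary, so I would present it in the compact two-step form above rather than expanding each inverse.
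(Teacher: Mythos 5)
Your proposal is correct and follows essentially the same route as the paper: both unwind Rump's recipe $R(u,v)=(\ell_{\ell_v^{-1}(u)}(v),\ell_v^{-1}(u))$ on $X\ltimes_\pi S$, first solving $(y,t)\cdot(u_1,u_2)=(x,s)$ to get the $\beta'$-component, then applying $\ell_{(u_1,u_2)}$ to $(y,t)$ and using $\beta_y(x)\cdot y=\alpha_x(y)$, $y\cdot\beta_y(x)=x$, and the involutivity identity $\tilde\alpha_s(b)=\tilde\beta_{\tilde\beta_b(s)}^{-1}(b)$ to collapse the $\alpha'$-component. No gaps.
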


\begin{proof}
First observe that 
\[
x\cdot y=\beta_x^{-1}(y)\Rightarrow x\cdot \beta_x(y)=y\Rightarrow y\cdot \beta_y(x)=x
\]
and 
\[
x\cdot y=\beta_x^{-1}(y)\Rightarrow \beta_y(x)\cdot y=\beta^{-1}_{\beta_y(x)}(y)=\alpha_x(y).
\]
The above identities will be frequently used in the sequel. 

Suppose that 
\[
R_{X\ltimes_\pi S}((x,s),(y,t))=\big(\alpha'_{(x,s)}(y,t), \beta'_{(y,t)}(x,s)\big)\qforal x,y\in X, s,t\in S. 
\]
Let
$
\beta'_{(y,t)}(x,s)=(z,p).
$
Then 
\begin{align*}
(x,s)
=\ell_{(y,t)}(z,p)=(y,t)\cdot (z,p)
=(y\cdot z, \pi_{y\cdot z}(t)\cdot \pi_{z\cdot y}(p)).
\end{align*}
So 
$
z=\beta_y(x)
$
and 
\begin{align*}
s=\pi_{y\cdot z}(t)\cdot \pi_{z\cdot y}(p)
&\Rightarrow\pi_{z\cdot y}(p)=\tilde\beta_{\pi_{y\cdot z}(t)}(s)
\Rightarrow p=\pi_{z\cdot y}^{-1}(\tilde\beta_{\pi_{y\cdot z}(t)}(s))\\
&\Rightarrow p=\pi_{\beta_y(x)\cdot y}^{-1}(\tilde\beta_{\pi_{y\cdot \beta_y(x)}(t)}(s))
\Rightarrow p=\pi_{\alpha_x(y)}^{-1}(\tilde\beta_{\pi_x(t)}(s)).
\end{align*}
Thus 
\[
\beta'_{(y,t)}(x,s)=\big(\beta_y(x),\pi_{\alpha_x(y)}^{-1}(\tilde\beta_{\pi_x(t)}(s))\big).
\]
%%%%%%%%%%%%%%

Now 
\[
\alpha'_{(x,s)}(y,t)=\beta'^{-1}_{\beta'_{(y,t)}(x,s)}(y,t)=:(u,v). 
\]
Then 
\begin{align*}
(u,v)
&=\big(\beta_y(x),\pi_{\alpha_x(y)}^{-1}(\tilde\beta_{\pi_x(t)}(s))\big)\cdot (y,t)\\
&=\big(\beta_y(x)\cdot y, \pi_{\beta_y(x)\cdot y}(\pi^{-1}_{\alpha_x(y)}(\tilde\beta_{\pi_x(t)}(s))\cdot \pi_{y\cdot \beta_y(x)}(t))\big)\\
&=\big(\beta_y(x)\cdot y, \pi_{\alpha_x(y)}(\pi^{-1}_{\alpha_x(y)}(\tilde\beta_{\pi_x(t)}(s))\cdot \pi_{x}(t))\big)\\
&=\big(\beta_y(x)\cdot y, \tilde\beta_{\pi_x(t)}(s)\cdot \pi_{x}(t)\big)\\
&=\big(\beta_y(x)\cdot y, \tilde\alpha_s(\pi_{x}(t))\big).
\end{align*}
Therefore
\[
\alpha'_{(x,s)}(y,t)=\big(\beta_y(x)\cdot y, \tilde\alpha_s(\pi_{x}(t))\big).
\]
This ends the proof.
%%%%%%%%%%%%%%
\end{proof}

\begin{lem}
\label{L:caction}
Let $X$ and $S$ be cycle sets, and $\pi$ be an action of $X$ on $S$. Then $\pi$ can be extended to an action of $G_{R_X}$ on $G_{R_S}$. 
\end{lem}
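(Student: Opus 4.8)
The plan is to build the action directly from the presentation of $G_{R_X}$ supplied by Lemma \ref{L:xyx}, using the observation in Remark \ref{R:(i)} that every $\pi_x$ is already a symmetry of $R_S$. The whole argument reduces to lifting the generator-level maps $\pi_x$ to $G_{R_S}$ and then checking that the defining relators of $G_{R_X}$ are respected.

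First I would promote each $\pi_x$ to the structure group. By Remark \ref{R:(i)}, for every $x\in X$ the bijection $\pi_x$ is a YB-isomorphism of $R_S$ onto itself. Arguing exactly as in the proof of Proposition \ref{P:symcon} (taken with both solutions equal to $R_S$ and $h=\pi_x$), $\pi_x$ extends to a group homomorphism $G_{R_S}\to G_{R_S}$; since $\pi_x$ is bijective and its inverse is again a YB-isomorphism of $R_S$, this extension is in fact an automorphism, which I continue to denote by $\pi_x$. This yields a set map $X\to\aut(G_{R_S})$, $x\mapsto\pi_x$.

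Next I would extend this set map to a group homomorphism $G_{R_X}\to\aut(G_{R_S})$. By Lemma \ref{L:xyx}, $G_{R_X}$ is presented on the generating set $X$ subject only to the relations $(y\cdot x)y=(x\cdot y)x$ for $x,y\in X$. By the universal property of this presentation, the assignment $x\mapsto\pi_x$ descends to a well-defined homomorphism provided each relator is respected, that is, provided
\[
\pi_{y\cdot x}\circ\pi_y=\pi_{x\cdot y}\circ\pi_x\qforal x,y\in X
\]
holds in $\aut(G_{R_S})$. Note that $y\cdot x$ and $x\cdot y$ lie in $X$ because $X$ is a cycle set, so the left-hand symbols $\pi_{y\cdot x}$ and $\pi_{x\cdot y}$ are among the automorphisms already constructed.

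The crux is therefore to verify this identity of automorphisms, and here the axioms do the work. Condition (2) of Definition \ref{D:rump} states precisely that $\pi_{y\cdot x}\pi_y(s)=\pi_{x\cdot y}\pi_x(s)$ for all $s\in S$. Since $S$ generates $G_{R_S}$ and both $\pi_{y\cdot x}\pi_y$ and $\pi_{x\cdot y}\pi_x$ are automorphisms of $G_{R_S}$, two such maps agreeing on the generating set $S$ agree on all of $G_{R_S}$; hence the displayed relation holds. The resulting homomorphism $G_{R_X}\to\aut(G_{R_S})$ is the desired action extending $\pi$. The only point requiring care is bookkeeping, namely matching the order of composition in $\aut(G_{R_S})$ against the order of multiplication in the relator $(y\cdot x)y=(x\cdot y)x$; this is routine once the two ingredients above are in place, and I do not expect a genuine obstacle.
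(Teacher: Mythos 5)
Your proposal is correct and follows essentially the same route as the paper: both arguments rest on the presentation of $G_{R_X}$ from Lemma \ref{L:xyx} together with condition (ii) of Definition \ref{D:rump} to extend over $G_{R_X}$, and on the fact that each $\pi_x$ preserves the defining relations of $G_{R_S}$ to extend over $G_{R_S}$. The only (immaterial) difference is that you route the second step through Remark \ref{R:(i)} and the mechanism of Proposition \ref{P:symcon}, whereas the paper verifies directly via Lemma \ref{L:xyx} that the cycle-morphism property of $\pi_x$ forces $\pi_x(s\cdot t)\pi_x(s)=\pi_x(t\cdot s)\pi_x(t)$.
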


\begin{proof}
Notice that, in Definition \ref{D:rump},
(i) says that $\pi_x$ is a cycle morphism on $S$ for every $x\in X$, and 
(ii) says that $\pi_{y\cdot x}\pi_y=\pi_{x\cdot y}\pi_x$ for all $x, y\in X$. 
Thus Lemma \ref{L:xyx} and the latter imply that the action $\pi$ can be extended to an action 
$
\pi: G_{R_X}\curvearrowright S.
$

Applying Lemma \ref{L:xyx} to $G_{R_S}$, one has 
\begin{align*}
(s\cdot t) s=(t\cdot s)t\qforal s, t\in S.
\end{align*} 
Since $\pi_x(s), \pi_x(t)\in S$ by Definition \ref{D:rump} (iii), 
 replacing $s$ and $t$ by $\pi_x(s)$ and $\pi_x(t)$, respectively, in the identity obtained above gives 
\[
(\pi_x(s)\cdot \pi_x(t))\pi_x(s)= (\pi_x(t)\cdot \pi_x(s))\pi_x(t)\qforal x\in X, s,t\in S.
\]
This implies 
\[
\pi_x(s\cdot t)\pi_x(s)= \pi_x(t\cdot s)\pi_x(t)\qforal x\in X, s,t\in S
\] 
as $\pi_x$ is a cycle morphism on $S$. Therefore, by Lemma \ref{L:xyx}, $\pi$ can be extended to an action $\pi: G_{R_X}\curvearrowright G_{R_S}$. 
\end{proof}

Under the conditions of Lemma \ref{L:caction}, one can form the semi-direct product $G_{R_X}\ltimes_\pi G_{R_S}$, where 
\[
\pi: G_{R_X}\times G_{R_S}\to G_{R_S}, \ (x,s)\mapsto \pi_x(s).
\] 
It is also worth mentioning that the identity \eqref{E:asemi} automatically holds true for the action $\pi$ of $G_{R_X}$ on $G_{R_S}$ obtained in Lemma \ref{L:caction}. 
In fact, let $\theta=\pi$ and so it suffices to show that 
$\pi_x(\tilde b^{-1}\alpha_s \tilde b)=(\tilde b^{-1}\alpha_{\pi_x(s)}\tilde b)\pi_x$ for all $x\in X$ and $s\in S$.
But the restrictions $b$ and $\tilde b$ onto $X$ and $S$ are the identity mappings. 
Thus this amounts to $\pi_x(\alpha_s(t))=\alpha_{\pi_x(s)}(\pi_x(t))$. But this holds true by Remark \ref{R:(i)}. 

%%%%%%%
Therefore, one obtains a regular affine action $\rho^X\ltimes_\pi\rho^S$ of $G_{R_X}\ltimes_\pi G_{R_S}$ on $A_{R_X}\times A_{R_S}$ 
(cf. Subsection \ref{SS:aff}), and so a YBE solution $\hat R$ on $G_{R_X}\ltimes_\pi G_{R_S}$.
(cf. Subsection \ref{SS:constructing}). It is natural to write $\hat R|_{(X\times S)^2}$ as $R_X\ltimes_\pi R_S$, 
called the \textit{semi-direct product of $R_X$ and $R_S$ by $\pi$}. 
Then we obtain the following commutation relation:

\begin{prop}[\textbf{Commutation Relation for Semi-Direct Products}]
\label{P:RXRS}
Let $X$ and $S$ be cycle sets, and $\pi$ be an action of $X$ on $S$. Then 
\[
R_X\ltimes_\pi R_S=R_{X\ltimes_\pi S}.
\]  
\end{prop}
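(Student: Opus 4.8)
The plan is to unwind the definition of the left-hand side and match it coordinate-by-coordinate with the explicit formula for $R_{X\ltimes_\pi S}$ furnished by Corollary \ref{C:semi}. By construction (see the paragraph ``Semi-direct product revisited''), $R_X\ltimes_\pi R_S$ is the restriction to $(X\times S)^2$ of the YBE solution $\hat R$ on $G_{R_X}\ltimes_\pi G_{R_S}$ which, via Theorem \ref{T:gYBE} and Remark \ref{R: extension}, is induced by the regular affine action $\Gamma:=\rho^X\ltimes_\pi\rho^S=(\Pi,B)$ of $G_{R_X}\ltimes_\pi G_{R_S}$ on $A_{R_X}\times A_{R_S}$. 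Writing $\rho^X=(\alpha,b)$ and $\rho^S=(\tilde\alpha,\tilde b)$, the recipe of Theorem \ref{T:gYBE} reads
$$\hat\alpha_{(g,h)}(g',h')=B^{-1}\bigl(\Pi_{(g,h)}(B(g',h'))\bigr),\qquad \hat\beta_{(g',h')}(g,h)=\hat\alpha_{(g,h)}(g',h')^{-1}\,(g,h)(g',h'),$$
where the last product is taken in $G_{R_X}\ltimes_\pi G_{R_S}$. Thus the whole proof reduces to evaluating these two maps on a pair of generators $(x,s),(y,t)\in X\times S$.

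The first evaluation is essentially bookkeeping. The translational part is $B=(b,\tilde b)$, and since $b$ and $\tilde b$ restrict to the identity on $X$ and $S$, both $B$ and $B^{-1}$ act as the identity on $X\times S\subseteq A_{R_X}\times A_{R_S}$. For the linear part $\Pi_{(x,s)}=(\alpha_x,\,\tilde\alpha_s\circ\tilde b\circ\pi_x\circ\tilde b^{-1})$, I would use that $\alpha$ preserves $X$ and restricts there to the original map $\alpha$ (Proposition \ref{P:inv}), that $\tilde\alpha$ preserves $S$ and restricts to the $R_S$-map $\tilde\alpha$, and that $\pi_x\in\sym(S)$ preserves $S$ with $\tilde b,\tilde b^{-1}$ again trivial there. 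Feeding $B(y,t)=(y,t)$ through $\Pi_{(x,s)}$ and then through $B^{-1}$ collapses everything back to generators and yields $\hat\alpha_{(x,s)}(y,t)=\bigl(\alpha_x(y),\tilde\alpha_s(\pi_x(t))\bigr)$, which is exactly the first coordinate of the formula in Corollary \ref{C:semi}.

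The second evaluation is the one to handle with care. Here I would use the group operations of $G_{R_X}\ltimes_\pi G_{R_S}$, namely $(g_1,h_1)(g_2,h_2)=(g_1g_2,\,h_1\pi_{g_1}(h_2))$ and $(g,h)^{-1}=(g^{-1},\pi_{g^{-1}}(h^{-1}))$. Substituting $\hat\alpha_{(x,s)}(y,t)=\bigl(\alpha_x(y),\tilde\alpha_s(\pi_x(t))\bigr)$ and $(x,s)(y,t)=\bigl(xy,\,s\,\pi_x(t)\bigr)$ into the formula for $\hat\beta$, the first coordinate simplifies to $\alpha_x(y)^{-1}xy=\beta_y(x)$ by the defining relation of $G_{R_X}$. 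The second coordinate becomes $\pi_{\alpha_x(y)^{-1}}\!\bigl(\tilde\alpha_s(\pi_x(t))^{-1}\,s\,\pi_x(t)\bigr)$, using that $\pi_{\alpha_x(y)^{-1}}$ is an automorphism; applying the defining relation $s\,\pi_x(t)=\tilde\alpha_s(\pi_x(t))\,\tilde\beta_{\pi_x(t)}(s)$ of $G_{R_S}$ reduces the argument to $\tilde\beta_{\pi_x(t)}(s)$, and the identity $\pi_{\alpha_x(y)^{-1}}=\pi_{\alpha_x(y)}^{-1}$ (valid since $\pi$ is an action) converts this into $\pi_{\alpha_x(y)}^{-1}(\tilde\beta_{\pi_x(t)}(s))$, precisely the second coordinate of Corollary \ref{C:semi}. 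Comparing both coordinates finishes the proof.

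The main obstacle, such as it is, is organizational rather than deep: one must keep straight the three binary operations in play (the group products of $G_{R_X}$, of $G_{R_S}$, and of their $\pi$-twisted semi-direct product, as opposed to the cycle operation), track exactly where the cocycles $b,\tilde b$ are genuinely trivial (only on the generating sets), and invoke each structure-group relation in the correct coordinate. Once the recipe of Theorem \ref{T:gYBE} is correctly transported across the semi-direct product, the two sides fall out from the defining relations of $G_{R_X}$ and $G_{R_S}$ together with the fact that each $\pi_x$ preserves $S$ and acts by automorphisms.
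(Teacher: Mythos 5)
Your proposal is correct and takes essentially the same route as the paper: read off $\hat\alpha_{(x,s)}(y,t)=(\alpha_x(y),\tilde\alpha_s(\pi_x(t)))$ from the semi-direct product affine action, compute $\hat\beta_{(y,t)}(x,s)=\hat\alpha_{(x,s)}(y,t)^{-1}(x,s)(y,t)=(\beta_y(x),\pi_{\alpha_x(y)}^{-1}(\tilde\beta_{\pi_x(t)}(s)))$ using the twisted group law and the defining relations of $G_{R_X}$ and $G_{R_S}$, and compare with Corollary \ref{C:semi}. The only difference is that you spell out the bookkeeping (triviality of the cocycles on generators, $\pi_{\alpha_x(y)^{-1}}=\pi_{\alpha_x(y)}^{-1}$) that the paper compresses into ``an easy calculation.''
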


\begin{proof}
Assume that 
\[
\hat R((x,s),(y,t))=\big(\hat\alpha_{(x,s)}(y,t), \hat\beta_{(y,t)}(x,s)\big)\qforal x,y\in X, s,t\in S. 
\]
It follows from Subsection \ref{SS:aff} $2^\circ$ that 
\[
\hat \alpha_{(x,s)}(y,t)=(\alpha_x(y),\tilde\alpha_s(\pi_x(t)).
\]
Then an easy calculation yields
\begin{align*}
\hat\beta_{(y,t)}((x,s))
&=\hat\alpha_{(x,s)}(y,t)^{-1}(xy,s\pi_x(t))\\
&=(\alpha_x(y)^{-1}xy, \pi_{\alpha_x(y)^{-1}}(\tilde\alpha_s(\pi_x(t))^{-1}s\pi_x(t)))\\
&=(\beta_y(x), \pi_{\alpha_x(y)}^{-1}(\tilde\beta_{\pi_x(t)}(s)). 
\end{align*}
Therefore comparing the formula of $R_{X\ltimes_\pi S}$ given in Corollary \ref{C:semi} yields the desired commutation relation. 
\end{proof}

For the structure groups $G_{R_X}$, $G_{R_S}$ and $G_{R_X\ltimes_\pi R_S}$, we have the following: 

\begin{prop}
\label{P:semidp}
Keep the above notation. Then there is a group homomorphism 
\[
\Pi: G_{R_X\ltimes_\pi R_S}\to G_{R_X}\ltimes_\pi G_{R_S}.
\]
\end{prop}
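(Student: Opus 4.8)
The plan is to define $\Pi$ directly on the generators of $G_{R_X\ltimes_\pi R_S}$ and to invoke the universal property of a group presented by generators and relations (von Dyck's theorem). Recall that $G_{R_X\ltimes_\pi R_S}$ is generated by $X\times S$ subject to the relations coming from the YBE solution $R_{X\ltimes_\pi S}=R_X\ltimes_\pi R_S$. I would set
\[
\Pi(x,s):=(x,s)\in G_{R_X}\ltimes_\pi G_{R_S}\qforal x\in X,\ s\in S,
\]
where on the right $x$ and $s$ denote the canonical images of the generators in $G_{R_X}$ and $G_{R_S}$, respectively. To conclude that $\Pi$ extends to a group homomorphism it then suffices to check that these images respect every defining relation of $G_{R_X\ltimes_\pi R_S}$.

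First I would write the family of defining relations explicitly. By Corollary \ref{C:semi} and Proposition \ref{P:RXRS}, the relation associated to a pair of generators $(x,s),(y,t)$ reads
\[
(x,s)(y,t)=\big(\alpha_x(y),\tilde\alpha_s(\pi_x(t))\big)\big(\beta_y(x),\pi_{\alpha_x(y)}^{-1}(\tilde\beta_{\pi_x(t)}(s))\big)
\]
in $G_{R_X\ltimes_\pi R_S}$; note that all four entries again lie in $X$ or $S$, so both words on either side are products of generators. Applying $\Pi$ and computing both sides inside $G_{R_X}\ltimes_\pi G_{R_S}$ using the semi-direct product multiplication $(g_1,h_1)(g_2,h_2)=(g_1g_2,h_1\pi_{g_1}(h_2))$ reduces the verification to matching two coordinates of the pair $(xy,\,s\,\pi_x(t))$.

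The first coordinate requires $xy=\alpha_x(y)\beta_y(x)$, which is exactly the defining relation of $G_{R_X}$. For the second coordinate, carrying out the multiplication on the right-hand side produces the factor $\pi_{\alpha_x(y)}\big(\pi_{\alpha_x(y)}^{-1}(\tilde\beta_{\pi_x(t)}(s))\big)=\tilde\beta_{\pi_x(t)}(s)$, so the two twists cancel and the second coordinate becomes $\tilde\alpha_s(\pi_x(t))\tilde\beta_{\pi_x(t)}(s)$. Since $\pi_x(t)\in S$ by Definition \ref{D:rump}(iii), the defining relation of $G_{R_S}$ applied to the pair $s,\pi_x(t)$ gives $s\,\pi_x(t)=\tilde\alpha_s(\pi_x(t))\tilde\beta_{\pi_x(t)}(s)$, which matches the second coordinate of the left-hand side. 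Hence every relation is preserved and $\Pi$ is a well-defined homomorphism.

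I do not expect any deep obstacle here; the crux is the bookkeeping in the second coordinate. One must recognize that the inverse twist $\pi_{\alpha_x(y)}^{-1}$ built into $\hat\beta$ (as recorded in Corollary \ref{C:semi}) is precisely what the $\pi_{\alpha_x(y)}$ arising from the semi-direct product multiplication cancels, and that the surviving expression is a genuine defining relation of $G_{R_S}$ only because $\pi_x(t)$ again lies in $S$. Getting the multiplication convention of $G_{R_X}\ltimes_\pi G_{R_S}$ to agree with the one used to derive the formula in Proposition \ref{P:RXRS} is the one place where a sign or ordering slip could creep in, so I would double-check that both computations use $(g_1,h_1)(g_2,h_2)=(g_1g_2,h_1\pi_{g_1}(h_2))$ consistently.
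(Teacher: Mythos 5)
Your proposal is correct and follows essentially the same strategy as the paper: define $\Pi$ on the generators $X\times S$ and verify that the defining relations of $G_{R_X\ltimes_\pi R_S}$ are preserved under the semi-direct product multiplication $(g_1,h_1)(g_2,h_2)=(g_1g_2,h_1\pi_{g_1}(h_2))$, the crux in both cases being that the first coordinate reduces to the defining relation of $G_{R_X}$ and the second to that of $G_{R_S}$ applied to $s$ and $\pi_x(t)$. The only cosmetic difference is that you check the relations in their YBE form using the explicit formula of Corollary \ref{C:semi} (so that the twist $\pi_{\alpha_x(y)}^{-1}$ visibly cancels), whereas the paper checks the equivalent cycle-set form $((x,s)\cdot(y,t))(x,s)=((y,t)\cdot(x,s))(y,t)$ supplied by Lemma \ref{L:xyx} and \eqref{E:sdp}.
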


\begin{proof}
By Proposition \ref{P:RXRS}, $G_{R_X\ltimes_\pi R_S}=G_{R_{X\ltimes_\pi S}}$.
Applying Lemma \ref{L:xyx} to $G_{R_{X\ltimes_\pi S}}$, we have the following relations
\[
((x,s)\cdot (y,t))(x,s)=((y,t)\cdot (x,s))(y,t)\qforal x,y\in X, \ s,t\in S. 
\]
From \eqref{E:sdp}, this is equivalent to 
\[
(x\cdot y, \gamma_{x,y}(s,t))(x,s)=(y\cdot x, \gamma_{y,x}(t,s))(y,t).
\]

Let $\Pi: X\ltimes_\pi S\to G_{R_X}\ltimes_\pi G_{R_S}$ be defined via 
\[
\Pi(x,s)=(x,s)\qforal x\in X, s\in S.
\] 
Simple calculations show that 
\begin{align*}
  \Pi(x\cdot y, \gamma_{x,y}(s,t))\, \Pi(x,s)
&=\big(x\cdot y, \pi_{x\cdot y}(s)\cdot \pi_{y\cdot x}(t)\big)\, (x,s)\\
&=\big((x\cdot y)x, (\pi_{x\cdot y}(s)\cdot \pi_{y\cdot x}(t))\pi_{x\cdot y}(s)\big) 
\end{align*}
and 
\begin{align*}
  \Pi(y\cdot x, \gamma_{y,x}(t,s))\, \Pi(y,t)
&=(y\cdot x, \pi_{y\cdot x}(t)\cdot \pi_{x\cdot y}(s))(y,t)\\
&=\big((y\cdot x)y, (\pi_{y\cdot x}(t)\cdot \pi_{x\cdot y}(s))\pi_{y\cdot x}(t)\big).
\end{align*}

Therefore, by Lemma \ref{L:xyx}, $\Pi$ can be extended a group homomorphism, still denoted by $\Pi$, 
from $G_{R_X\ltimes_\pi R_S}$ to $G_{R_X}\ltimes_\pi G_{R_S}$. 
%%%%%%%%%%%%%%
\end{proof}

In general, the homomorphism $\Pi$ obtained in Proposition \ref{P:semidp} is not an isomorphism. For instance, if  $R_X$ and $R_S$ are the trivial YBE
solutions, and $\pi$ is the trivial action of $X$ on $S$, then
$G_{R_X\ltimes_\pi R_S}\cong \bZ^{X\times S}$ by Corollary \ref{C:semi}, 
while 
$G_{R_X}\ltimes_\pi G_{R_S}=G_{R_X}\times G_{R_S}\cong \bZ^{X\sqcup S}$.

\end{document}